\definecolor{shadecolor}{gray}{0.875}
\theoremstyle{plain}
\newtheorem{theorem}{Theorem}[section]
\newtheorem{corollary}[theorem]{Corollary}
\newtheorem{lemma}[theorem]{Lemma}
\newtheorem{definition-lemma}[theorem]{Definition-Lemma}
\newtheorem{proposition}[theorem]{Proposition}
\newtheorem{conjecture}[theorem]{Conjecture}
\newtheorem{step}{Step}
\theoremstyle{definition}
\newtheorem{definition}[theorem]{Definition}
\newtheorem{remark}[theorem]{Remark}
\numberwithin{equation}{section}
\numberwithin{figure}{section}
\numberwithin{table}{section}
\newdimen\argwidth
\def\db[#1\db]{%
 \setbox0=\hbox{$#1$}\argwidth=\wd0
 \setbox0=\hbox{$\left[\box0\right]$}
  \advance\argwidth by -\wd0
 \left[\kern.3\argwidth\box0 \kern.3\argwidth\right]}
\newcommand{\rank}{\operatorname{rank}}
\newcommand{\diag}{\operatorname{diag}}
\renewcommand{\det}{\operatorname{det}}
\newcommand{\Image}{\operatorname{Im}}
\newcommand{\coker}{\operatorname{coker}}
\newcommand{\GL}{\operatorname{GL}}
\newcommand{\PGL}{\operatorname{PGL}}
\newcommand{\SL}{\operatorname{SL}}
\newcommand{\coh}{\operatorname{coh}}
\newcommand{\Pic}{\operatorname{Pic}}
\newcommand{\Ext}{\operatorname{Ext}}
\newcommand{\Hom}{\operatorname{Hom}}
\newcommand{\Spec}{\operatorname{Spec}}
\newcommand{\Amp}{\operatorname{Amp}}
\newcommand{\bC}{\ensuremath{\mathbb{C}}}
\newcommand{\bL}{\ensuremath{\mathbb{L}}}
\newcommand{\bP}{\ensuremath{\mathbb{P}}}
\newcommand{\bQ}{\ensuremath{\mathbb{Q}}}
\newcommand{\bR}{\ensuremath{\mathbb{R}}}
\newcommand{\bZ}{\ensuremath{\mathbb{Z}}}
\newcommand{\scE}{\ensuremath{\mathcal{E}}}
\newcommand{\scF}{\ensuremath{\mathcal{F}}}
\newcommand{\scL}{\ensuremath{\mathcal{L}}}
\newcommand{\scM}{\ensuremath{\mathcal{M}}}
\newcommand{\scO}{\ensuremath{\mathcal{O}}}
\newcommand{\scR}{\ensuremath{\mathcal{R}}}
\newcommand{\scU}{\ensuremath{\mathcal{U}}}
\newcommand{\cE}{\ensuremath{\mathcal{E}}}
\newcommand{\cL}{\ensuremath{\mathcal{L}}}
\newcommand{\cM}{\ensuremath{\mathcal{M}}}
\newcommand{\cO}{\ensuremath{\mathcal{O}}}
\newcommand{\cU}{\ensuremath{\mathcal{U}}}
\newcommand{\Btilde}{{\widetilde{B}}}
\newcommand{\Gtilde}{{\widetilde{G}}}
\newcommand{\Ttilde}{{\widetilde{T}}}
\newcommand{\ibar}{{\overline{\imath}}}
\newcommand{\Cbar}{{\overline{C}}}
\newcommand{\Gbar}{{\overline{G}}}
\newcommand{\Hbar}{{\overline{H}}}
\newcommand{\Zbar}{{\overline{Z}}}
\newcommand{\xleftrightarrows}[2][]{\mathrel{
 \raise.40ex\hbox{$
       \ext@arrow 3095\leftarrowfill@{\phantom{#1}}{#2}$}
 \setbox0=\hbox{$\ext@arrow 0359\rightarrowfill@{#1}{\phantom{#2}}$}
 \kern-\wd0 \lower.40ex\box0}}
\newcommand{\xrightleftarrows}[2][]{\mathrel{
 \raise.40ex\hbox{$\ext@arrow 3095\rightarrowfill@{\phantom{#1}}{#2}$}
 \setbox0=\hbox{$\ext@arrow 0359\leftarrowfill@{#1}{\phantom{#2}}$}
 \kern-\wd0 \lower.40ex\box0}}
\newcommand{\xleftrightarrow}[2][]{
     \ext@arrow 0055{\leftrightarrowfill@}{#1}{#2}
}
\def\leftrightarrowfill@{
 \arrowfill@\leftarrow\relbar\rightarrow
}  
\newcommand{\ghilb}{G\operatorname{-Hilb}}
\newcommand{\Ymax}{Y_{\max}}
\newcommand{\cpt}{\text{\rm\small cpt}}
\newcommand{\zdim}{0\text{-dim}}
\newcommand{\ZL}{\operatorname{ZL}}
\newcommand{\Irr}{\operatorname{Irr}}
\newcommand{\Perf}{\operatorname{Perf}}
\newcommand{\rhonat}{\rho_{\text{nat}}}
\title{$G$-constellations and the maximal resolution of a quotient surface singularity}
\author{Akira Ishii}
\date{}
\begin{document}

\maketitle

\begin{abstract}
For a finite subgroup $G$ of $\GL(2, \bC)$, we consider the moduli space $\scM_\theta$ of $G$-constellations.
It depends on the stability parameter $\theta$ and if $\theta$ is generic it is a resolution of singularities of $\bC^2/G$.
In this paper, we show that a resolution $Y$ of $\bC^2/G$ is isomorphic to $\scM_\theta$ for some generic $\theta$ if and only if $Y$ is dominated by the maximal resolution under the assumption that $G$ is abelian or small.

{\em MSC 2010} : 14D20; 	14E16 ; 14J17 
\end{abstract}


\section{Introduction}
 \label{sc:introduction}
The moduli spaces of $G$-constellations (on an affine space) are introduced in \cite{craw-ishii}.
It is a generalization of the Hilbert scheme of $G$-orbits, which is denoted by $\ghilb$.
The moduli space depends on some stability parameter $\theta$ and the moduli space of
$\theta$-stable $G$-constellations is denoted by $\scM_\theta$.
If $G$ is a subgroup of $\SL(n, \bC)$ acting on $\bC^n$ and $n \le 3$,
then $\scM_\theta$ is a crepant resolution of $\bC^n/G$ for a generic stability parameter $\theta$.
The main result of \cite{craw-ishii} is that for a finite abelian subgroup $G \subset \SL(3, \bC)$
and for a projective crepant resolution $Y \to \bC^3/G$, there is a generic stability parameter
$\theta$ such that $Y \cong \scM_\theta$.
See \cite{Oskar}, \cite{Nolla-Sekiya}, \cite{Jung_TQS} and \cite{Jung_CI} for related results.

The purpose of this paper is to consider the case where $G$ 
is a finite subgroup of $\GL(2, \bC)$.
In this case, $\ghilb(\bC^2)$ is the minimal resolution of $\bC^2/G$ by \cite{Ishii_MKG}
but $\scM_\theta$ is a resolution which may not be minimal for generic $\theta$
(as we see in this paper).
Then what is the condition for a resolution $Y \to \bC^2$ to be
isomorphic to some $\scM_\theta$?
One important observation is that there is a fully  faithful functor (see Theorem \ref{thm:fully_faithful})
$$
D^b(\coh \scM_\theta) \hookrightarrow D^b(\coh^G \bC^2)
$$
between the derived categories.
According to the DK hypothesis \cite{Kawamata_DMC}, 
the inclusion of derived categories should be related with
inequalities of canonical divisors.
Then it is natural to ask if the following is true:
$Y$ is isomorphic to $\scM_\theta$ for some $\theta$
if and only if $Y$ is between the minimal and the maximal resolutions
(see Conjecture \ref{conjecture:main}),
 where the maximal resolution means the unique maximal one satisfying the inequality
as in \cite{Kollar-SB}.
The main result of this paper is the following.
Recall that $G$ is said to be small if it contains no pseudo reflection.
\begin{theorem}[=Theorem \ref{thm:small}]\label{thm:main}
Let $G \subset \GL(2, \bC)$ be a finite small subgroup and
let $X=\bC^2/G$ be the quotient singularity.
Then a resolution of singularities $Y \to X$ is isomorphic to $\scM_\theta$ for some $\theta$
if and only if $Y$ is dominated by the maximal resolution.
\end{theorem}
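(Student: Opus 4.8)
The plan is to recast the statement as an equality between two families of finite sets of exceptional divisors and then prove the two inclusions separately, with the forward direction controlled by the functor of Theorem~\ref{thm:fully_faithful} and the reverse direction by variation of GIT.

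\emph{Setting up the bookkeeping.} For a surface resolution $f\colon Y\to X$ the morphism is determined by the finite set $\scD(Y)$ of prime divisorial valuations of $X$ that $f$ extracts, and $Y_1$ dominates $Y_2$ precisely when $\scD(Y_2)\subseteq\scD(Y_1)$. Since every resolution factors through the minimal resolution $\ghilb(\bC^2)$ by \cite{Ishii_MKG}, one always has $\scD(\ghilb(\bC^2))\subseteq\scD(Y)$, and ``$Y$ is dominated by the maximal resolution'' is literally the condition $\scD(Y)\subseteq\scD(Y_{\max})$. Thus the theorem is equivalent to
$$
\{\,\scD(\scM_\theta): \theta\ \text{generic}\,\}=\{\,\scD(Y): \scD(\ghilb(\bC^2))\subseteq\scD(Y)\subseteq\scD(Y_{\max})\,\},
$$
where $Y_{\max}$ is the unique maximal resolution obeying the canonical inequality of \cite{Kollar-SB}. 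Because $G$ is small, the valuations satisfying that inequality form an explicit bounded set, so $\scD(Y_{\max})$ is finite and the right-hand family is a finite lattice; in the cyclic case this is the toric statement that resolutions are subdivisions of the defining cone and $\scD(Y_{\max})$ is the set of lattice points in the bounded region cut out by the discrepancy condition.

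\emph{Necessity.} I would bound the divisors extracted by $\scM_\theta$ through their discrepancies. The functor of Theorem~\ref{thm:fully_faithful} realizes $D^b(\coh\scM_\theta)$ as a full subcategory of $D^b(\coh^G\bC^2)=D^b(\coh[\bC^2/G])$, and comparing canonical classes along this embedding is exactly the DK picture of \cite{Kawamata_DMC}, which predicts $K_{\scM_\theta}$ to lie below the orbifold canonical of $[\bC^2/G]$. Rather than invoke this as a conjecture, I would compute the discrepancy of each exceptional divisor of $\scM_\theta$ directly from the moduli construction, using the tautological sheaves together with the GIT chamber of $\theta$ to pin down the Chern data, and verify that every such discrepancy obeys the inequality of \cite{Kollar-SB}. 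This yields $\scD(\scM_\theta)\subseteq\scD(Y_{\max})$, i.e.\ $\scM_\theta$ is dominated by $Y_{\max}$.

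\emph{Sufficiency.} This is the substantive direction, and I would attack it by variation of GIT. First I would exhibit $Y_{\max}$ itself as $\scM_{\theta_0}$ for $\theta_0$ in a suitable chamber, constructing $\theta_0$ from the geometry of $Y_{\max}$ in the manner of \cite{craw-ishii}: the tautological bundles attached to the irreducible representations of $G$ assemble into a tilting-type object on $Y_{\max}$ presenting it as a fine moduli of representations of the McKay quiver, whence an ample class on $Y_{\max}$ reads off as a stability parameter $\theta_0$ with $\scM_{\theta_0}\cong Y_{\max}$. Next I would use the wall-and-chamber structure on the space of stability parameters: crossing a wall out of the chamber of $\theta_0$ induces a projective morphism contracting a configuration of exceptional curves, and the VGIT dictionary identifies the chambers reachable from $\theta_0$ with the smooth partial contractions of $Y_{\max}$ over $X$. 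Since the admissible resolutions $Y$ with $\scD(\ghilb(\bC^2))\subseteq\scD(Y)\subseteq\scD(Y_{\max})$ are precisely these smooth contractions, every such $Y$ arises as $\scM_\theta$ for $\theta$ in the corresponding chamber.

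\emph{Main obstacle and the non-abelian case.} The crux is the final matching in the sufficiency argument: proving that the chamber decomposition is fine enough to isolate each contractible curve configuration, so that the assignment from chambers to intermediate resolutions is surjective and no admissible $Y$ is skipped, equivalently that the image of the ample/period map from the stability space covers all the relevant nef cones. For small non-abelian $G$ the McKay quiver is no longer that of a toric datum, so the tautological bundle and the wall combinatorics need extra care; I expect this can be organized either by running the same discrepancy-and-VGIT argument with the tree-shaped exceptional configuration of the minimal resolution in place of the toric fan, or by first reducing to the cyclic case through an intermediate abelian cover and then tracking how the divisor sets and stability chambers descend. This reduction, together with the verification that it preserves the characterization by $Y_{\max}$, is where I expect the main technical work to lie.
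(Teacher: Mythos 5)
There is a genuine gap, and it sits exactly where you locate ``the crux'': your sufficiency argument has no mechanism. First, your opening step --- exhibiting $\Ymax$ as $\scM_{\theta_0}$ ``in the manner of \cite{craw-ishii}'' --- is not available: the Craw--Ishii theorem applies to finite \emph{abelian} subgroups of $\SL(3,\bC)$, and producing a tautological/tilting description of moduli for a possibly non-abelian small $G\subset\GL(2,\bC)$ is precisely the content of the theorem, not a tool one can quote. Second, the VGIT step is incorrect as stated: crossing a wall between two \emph{generic} chambers does not contract curves --- every generic chamber yields a smooth resolution, and contractions occur only at non-generic parameters on the walls themselves, where the fine-moduli interpretation required by the statement breaks down. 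So the asserted dictionary ``chambers reachable from $\theta_0$ $\leftrightarrow$ smooth partial contractions of $\Ymax$'' is exactly the surjectivity one must prove, and you offer no argument for it. The paper proceeds quite differently: it realizes each intermediate $Y$ \emph{directly}. In the abelian case it embeds $G$ into $\SL(3,\bC)$ and constructs, by a toric induction (Proposition \ref{prop:ample}), a projective crepant resolution $U\to\bC^3/G$ with $Y\subset U$, then applies Craw--Ishii upstairs, so that $Y\cong\scM_\theta(\bC^2)\subset\scM_\theta(\bC^3)\cong U$. In the non-abelian case it sets $N=\langle -I\rangle$, replaces $G$-constellations on $\bC^2$ by $\Gbar=G/N$-constellations on $Y_N=\scM_{\theta^N}(\bC^2)$ via the iterated construction $\theta=m\theta^N+\varphi(\xi)$ (Theorem \ref{thm:iterated_construction}), computes the space of stability parameters as triples $(\theta_1,\theta_2,\theta_3)$ attached to the three special orbits on the exceptional curve agreeing on $R(\Zbar)$ (Corollary \ref{cor:F_0^*}), and glues \emph{local} abelian solutions at those orbits; the gluing works only because of the refinement of Craw--Ishii recorded in Theorem \ref{thm:craw-ishii}, which lets one prescribe the polarization $[\varphi_C^*(\theta)]$, together with the surjectivity of the restriction $\Amp(U_{\Sigma_k})\to\Amp(W)$ in Proposition \ref{prop:ample}. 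Your closing remark about ``reducing to the cyclic case through an intermediate abelian cover'' gestures in this direction, but without the iterated moduli construction and the computation of the enlarged parameter space (note that $K(\coh^{\Gbar}(Y_N))^0_\bQ$ is strictly larger than the naive parameter space built from $R(\Gbar)$ --- identifying it is a key technical step) the reduction cannot be carried out.

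Your necessity direction is also a plan without a mechanism, and as sketched it is essentially circular: you cannot ``pin down the Chern data'' of the tautological sheaves on $\scM_\theta$ without already knowing which resolution $\scM_\theta$ is. The paper's Proposition \ref{prop:only_if} supplies the missing device: embed $G$ into $\SL(3,\bC)$, so that (after perturbing $\theta$) $Y=\scM_\theta(\bC^2)$ sits as a divisor inside the crepant resolution $U=\scM_\theta(\bC^3)$ of $\bC^3/G$, giving $K_Y\cong\scO_U(Y)|_Y$; the discrepancy inequality $a_i\le 0$ then falls out of an explicit computation of the principal divisor $(z^n)$ on $U$, reduced to the abelian/toric case by passing to the stabilizers of the fixed lines. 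Without some such trick --- note it is the same $\SL(3)$-embedding that powers the sufficiency direction --- the DK-type inequality you want to ``verify'' remains, as you half-concede, a conjecture rather than a proof.
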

Conjecture \ref{conjecture:main} is a conjecture for general (not necessarily small) finite subgroups where the maximal resolution is defined for the pair of the quotient variety $\bC^2/G$
and the associated boundary divisor.
The ``only if'' part of the conjecture is proved in Proposition \ref{prop:only_if} by using the embedding of $G$ into $\SL(3, \bC)$ and the fact that the moduli space of $G$-constellations for $G\subset \SL(3, \bC)$
is a crepant resolution of $\bC^3/G$.
We can show that the conjecture is true if $G$ is abelian (Theorem \ref{thm:abelian})
by using the result of \cite{craw-ishii}.
The idea in the non-abelian case of Theorem \ref{thm:main} is to use iterated construction
of moduli spaces as in \cite{Ishii-Ito-Nolla} and reduce the problem to the abelian group case.
Namely, let $N$ be the cyclic group generated by $-I$, which is a normal subgroup of every non-abelian finite small subgroup.
We consider $G/N$-constellations on the moduli space of $N$-constellations
in \S \ref{sec:iterated}.
In order to do such iterated constructions, we define $G$-constellations on a general variety
and consider their stability parameters in \S \ref{sec:variety}.
A key to the proof of Theorem \ref{thm:main} is the description of the space of
stability parameters for $G/N$-constellations on the moduli space of $N$-constellations,
which is done in \S \ref{subsec:key}.
The proof of Theorem \ref{thm:main} is completed in \S \ref{subsec:main}.
\subsubsection*{\bf Acknowledgements}
This work depends a lot on the joint work \cite{craw-ishii} and the author thanks
Alastair Craw for stimulating discussions since then.
He is grateful to Seung-Jo Jung and the anonymous referee for many useful comments.
This research was supported in part by JSPS KAKENHI Grant Number 15K04819.

\section{$G$-constellations on $\bC^n$}
\subsection{Definitions}\label{subsec:def}
Let $V=\bC^n$ be an affine space and $G \subset \GL(V)$ a finite subgroup.
\begin{definition}
A {\it $G$-constellation} on $V$ is a $G$-equivariant coherent sheaf $E$ on $V$
such that $H^0(E)$ is isomorphic to the regular representation of $G$
as a $\bC[G]$-module.
\end{definition}

Let $R(G)=\bigoplus_{\rho \in \Irr(G)} \bZ \rho$ be the representation ring of $G$,
where $\Irr(G)$ denotes the set of irreducible representations of $G$.
The parameter space of stability conditions of $G$-constellations is the $\bQ$-vector space
$$
\Theta=\{\theta \in \Hom_\bZ(R(G), \bQ) \mid \theta(\bC[G])=0\},
$$
where $\bC[G]$ is regarded as the regular representation of $G$.
The definition of the stability is based on the stability of quiver representations \cite{King}:
\begin{definition}
A $G$-constellation $E$ is {\it $\theta$-stable} (or {\it $\theta$-semistable}) if every proper G-equivariant coherent subsheaf 
$0 \subsetneq F \subsetneq E$ satisfies $\theta(H^0(F)) > 0$ (or $\theta(H^0(F)) \ge 0$).
Here the representation space $H^0(F)$ of $G$ is regarded as an element of $R(G)$.
\end{definition}
By virtue of King \cite{King}, there is a fine moduli scheme $\scM_\theta=\scM_\theta(V)$ of $\theta$-stable $G$-constellations on $V$.
\begin{definition}
We say that a parameter $\theta \in \Theta$ is {\it generic} if a $\theta$-semistable $G$-constellation is always $\theta$-stable.
\end{definition}
There is a morphism $\tau:\scM_\theta(V) \to V/G$ which sends a $G$-constellation to its support.
It is a projective morphism if $\theta$ is generic (see \cite[Proposition 2.2]{craw-ishii}).
\subsection{Results of \cite{craw-ishii}}
In this subsection,
we recall results from \cite{craw-ishii}.
Suppose $V= \bC^3$ and let $G \subset \SL(V)$
be a finite abelian subgroup.
For a generic parameter $\theta \in \Theta$,
the morphism
$$
\tau: \scM_\theta \to \bC^3/G
$$
is a projective crepant resolution and
we have a Fourier-Mukai transform
$$
\Phi_\theta: D^b(\coh \scM_\theta) \overset{\sim}{\longrightarrow} D^b(\coh^G(\bC^3)).
$$
Here for a variety $Y$, $\coh Y$ denotes the category of coherent sheaves on $Y$
and if $Y$ is acted on by a finite group $G$, $\coh^G(Y)$ denotes the category
of $G$-equivariant coherent sheaves on $Y$.
The subset of $\Theta$ consisting of generic parameters is divided into chambers;
the moduli space $\scM_\theta$ and the equivalence $\Phi_\theta$ depend only on the chamber
to which $\theta$ belongs.
Thus we write $\scM_C$ and $\Phi_C$ instead of $\scM_\theta$ and $\Phi_\theta$
where $C$ is the chamber that contains $\theta$.
We write
$$\varphi_C: K(\coh_{0} \scM_C) \to K(\coh^G_0(\bC^3))$$
for the induced isomorphism of the Grothendieck groups of the full subcategories
$\coh_{0} \scM_\theta$ and $\coh^G_0(\bC^3)$
consisting of sheaves supported on the subsets $\tau^{-1}(0)$ and on $\{0\}$ respectively.
Since $K(\coh^G_0(\bC^3))$ has a basis consisting of skyscraper sheaves $\scO_0 \otimes \rho$ with $\rho \in \Irr(G)$, it is naturally identified with $R(G)$.

The dual of $\varphi_C$ is regarded as the map
$$
\varphi_C^*: K(\coh^G(\bC^3)) \to K(\coh \scM_\theta)
$$
between the Grothendieck groups of the categories of sheaves without restrictions on the supports.
Then $K(\coh^G(\bC^3))$ is identified with $\Hom(R(G), \bZ)$
and $\varphi_C^*$ induces an isomorphism
$$
\Theta \overset{\sim}{\longrightarrow} F^1K(\coh \scM_\theta)_\bQ,
$$
where $F^iK(\coh \scM_\theta)$ is the subgroup consisting of the classes of objects whose supports are at least of codimension $i$.

On $\scM_C$ there are tautological bundles $\scR_\rho$ for irreducible representations
$\rho$ such that $\bigoplus_\rho \scR_\rho \otimes_{\bC} \rho$ has a structure
of the universal $G$-constellation.
For $\theta \in C$,
$$
\scL_C(\theta):= \bigotimes_\rho (\det \scR_\rho)^{\otimes \theta(\rho)}
$$
is the (fractional) ample line bundle on $\scM_\theta$ obtained by the GIT construction.
It coincides with the class
\begin{equation}\label{eq:FM(theta)}
[\varphi_C^*(\theta)] \in F^1K(\coh \scM_C)_\bQ/F^2K(\coh \scM_C)_\bQ \cong \Pic(\scM_C)_\bQ
\end{equation}
as in \cite[\S 5.1]{craw-ishii}.
Hence $[\varphi_C^*(\theta)] \in \Amp(\scM_C)$ where $\Amp(\scM_C)$ is the ample cone considered in $\Pic(\scM_C)_\bQ$.
The main theorem of \cite{craw-ishii} and the argument in \cite[\S 8]{craw-ishii} shows the following:
\begin{theorem}[\cite{craw-ishii}]\label{thm:craw-ishii}
For any projective crepant resolution $Y \to \bC^3/G$ and a class $l \in \Amp(Y)$, there exist a chamber $C$ with $Y \cong \scM_C$ and a parameter $\theta \in C$ satisfying $l=[\varphi_C^*(\theta)]$.
\end{theorem}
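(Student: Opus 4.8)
The plan is to use the main theorem of \cite{craw-ishii} to realize $Y$ as a moduli space $\scM_C$, and then to \emph{upgrade} the containment $[\varphi_C^*(C)]\subseteq\Amp(\scM_C)$ recorded in the excerpt to an \emph{equality}, so that the prescribed ample class $l$ is genuinely attained. Once surjectivity onto the full ample cone is known, the theorem is immediate.

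First I would apply the main theorem of \cite{craw-ishii}: since $Y\to\bC^3/G$ is a projective crepant resolution, there is a chamber $C$ together with an isomorphism $Y\cong\scM_C$, a Fourier--Mukai equivalence $\Phi_C$, and the induced linear map $f:=[\varphi_C^*(-)]:\Theta\to\Pic(\scM_C)_\bQ$, namely the composite of the isomorphism $\Theta\xrightarrow{\sim}F^1K(\coh\scM_C)_\bQ$ with the surjection $F^1K\to (F^1/F^2)K=\Pic(\scM_C)_\bQ$. By \cite[\S 5.1]{craw-ishii} this $f$ sends $\theta\in C$ to the GIT polarization $\scL_C(\theta)$, so $f$ is a linear surjection with $f(C)\subseteq\Amp(\scM_C)=\Amp(Y)$. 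Being a linear surjection, $f$ is an open map, so $f(C)$ is an open subcone of $\Amp(Y)$; the theorem is thereby reduced to the single assertion $f(C)=\Amp(Y)$, for then $l\in\Amp(Y)$ admits a preimage $\theta\in C$ with $l=[\varphi_C^*(\theta)]$.

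To prove this surjectivity I would match the boundary walls of the open cone $f(C)$ with those of $\Amp(\scM_C)$. A wall of $C$ in $\Theta$ is a locus across which a strictly $\theta$-semistable $G$-constellation $E$ appears, destabilized by some $0\subsetneq F\subsetneq E$ with $\theta(H^0(F))=0$ on the wall. Through the equivalence $\Phi_C$, the classes of these destabilizing subobjects correspond, up to shift, to sheaves supported on the curves $Z\subset\scM_C$ that are contracted when the wall is crossed (either flopped to an adjacent crepant resolution, or contracted on the boundary of the movable cone $\Mov(\scM_C)$ giving the map to $\bC^3/G$). For such a $Z$ the degree $\scL_C(\theta)\cdot Z$ is, under $\Theta\cong F^1K_\bQ$, a linear form in $\theta$ vanishing exactly on that wall; hence $f$ carries the wall into the nef-cone hyperplane $\{\,D:D\cdot Z=0\,\}$, which is a wall of $\Amp(\scM_C)$. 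Conversely every flopping wall and every wall on $\partial\Mov(\scM_C)$ is detected by such a curve class, so $f(C)$ and $\Amp(\scM_C)$ share all their bounding hyperplanes and therefore coincide. This wall-to-wall dictionary is exactly the content of the argument in \cite[\S 8]{craw-ishii}.

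The hard part is precisely this last step: passing from the inclusion $f(C)\subseteq\Amp(Y)$ to equality. The delicate points are to show that the GIT wall-and-chamber decomposition of $\Theta$ refines to the decomposition of $\Mov(Y)$ into the ample cones of the crepant resolutions — so that no wall of $\Amp(Y)$ is invisible to the stability parameter — and to identify, via $\Phi_C$, each family of destabilizing objects with the structure sheaf of the correct flopped or contracted curve, computing the relevant intersection numbers. Granting the identification of these destabilizers and the resulting matching of walls, the equality $f(C)=\Amp(Y)$ follows, and with it the theorem.
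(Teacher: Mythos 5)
There is a genuine gap, and it sits exactly at the step you yourself flag as the hard part: the claimed equality $f(C)=\Amp(Y)$ for the \emph{single} chamber produced by the main theorem of \cite{craw-ishii}. Your wall-to-wall dictionary assumes that every wall of $C$ is cut out by a destabilizing class corresponding, under $\Phi_C$, to a sheaf on a contracted curve, so that $f$ carries each wall of $C$ into a supporting hyperplane $\{D : D\cdot Z=0\}$ of the nef cone. But by \cite[Theorem 5.9]{craw-ishii} (quoted in the paper's proof) this is only true for walls of type I and III, which are precisely the full preimages under $p\circ\varphi_C^*$ of walls of the ample cone. There are also walls of type $0$, which by definition contain no fiber of $p\circ\varphi_C^*$: the destabilizing class $[F]$ defining such a wall has a nontrivial component in $F^2K(\coh \scM_C)$, so $\theta\mapsto\theta([F])$ is \emph{not} of the form $D\mapsto D\cdot Z$ for any curve class, and crossing the wall changes neither $\scM_C$ nor the morphism to $\bC^3/G$ (only the tautological bundles and hence the identification $\varphi_C$). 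Since a type $0$ wall does not contain $\ker(p\circ\varphi_C^*)$, its image under $f$ is a full-dimensional cone in $\Pic(\scM_C)_\bQ$ that can slice through the interior of $\Amp(Y)$. Consequently $f(C)$ is in general a \emph{proper} open subcone of $\Amp(Y)$: the closure of $f(C)$ can have facets arising from faces of $\Cbar$ lying inside type $0$ walls, and these facets support no wall of the nef cone. Your matching of bounding hyperplanes, and with it the equality $f(C)=\Amp(Y)$, therefore fails; this is exactly why the theorem quantifies over chambers rather than fixing one --- adjacent chambers separated by type $0$ walls all give the same $Y$ but different maps $\varphi_C^*$, and it is the union of their images that covers $\Amp(Y)$.

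The paper's proof takes a different route precisely where yours stalls. It first invokes \cite[Proposition 8.2]{craw-ishii}, which for the \emph{given} class $l$ produces a chamber $C$ and a parameter $\theta\in\Cbar$ with $l=[\varphi_C^*(\theta)]$ on the nose (the chamber depends on $l$, not just on $Y$). Then, if $\theta\in\Cbar\setminus C$, it observes that since $p\circ\varphi_C^*(\theta)=l$ is ample, $\theta$ cannot lie on any type I or III wall, hence lies only on type $0$ walls; as those contain no fiber of $p\circ\varphi_C^*$, one can perturb $\theta$ within the fiber $(p\circ\varphi_C^*)^{-1}(l)$ away from all walls, and the relations \cite[(8.2), (8.3)]{craw-ishii} between the identifications of adjacent chambers guarantee that the perturbed parameter, in its new chamber, still maps to $l$. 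To repair your argument you would need to replace the single-chamber surjectivity claim with this chamber-changing and fiberwise-perturbation mechanism (or independently prove that the images $f_{C'}(C')$ over all chambers $C'$ with $\scM_{C'}\cong Y$ cover $\Amp(Y)$); as written, the assertion that every wall of $C$ is detected by a contracted curve is false.
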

\begin{proof}
The existence of a chamber $C$ such that $Y \cong \scM_C$ is \cite[Theorem 1.1]{craw-ishii}.
Moreover, \cite[Proposition 8.2]{craw-ishii} ensures that
we can find a chamber $C$ and a parameter $\theta \in \Cbar$ with $l=[\varphi_C^*(\theta)]$.
Suppose $\theta \in \Cbar \setminus C$.
We have to see we can perturb $\theta$ in the fiber of
$p \circ \varphi_C^*$ so that $\theta$ is in some chamber, where
$$
p: F^1K(\coh \scM_C)_\bQ \to \Pic(\scM_C)_\bQ
$$
is the projection.
Here recall that a wall of the chamber $C$ is either the preimage of a wall of the ample cone
by $p \circ \varphi_C^*$ (type I or III) or does not contain a fiber of $p \circ \varphi_C^*$
(type $0$); see \cite[Theorem 5.9]{craw-ishii}.
In our case, $p\circ \phi_C^*(\theta)=l$ is ample and therefore $\theta$ is on walls of type $0$.
Since the images of adjacent chambers in $F^1K(\coh \scM_C)_\bQ$ are related as in
\cite[(8,2) or (8.3)]{craw-ishii},
we can perturb $\theta$ in the fiber of $p \circ \varphi_C^*$
and go out of walls.
\end{proof}

\subsection{$G$-constellations on $\bC^2$}
Let $G$ be a finite subgroup of $\GL(2, \bC)$.
\begin{theorem}\label{thm:fully_faithful}
If $\theta$ is generic, then the moduli space $\scM_\theta$ is a resolution of singularities of $\bC^2/G$.
Moreover, the universal family of $G$-constellations defines a fully faithful functor
$$
\Phi_\theta: D^b(\coh \scM_\theta) \to D^b(\coh^G \bC^2).
$$
\end{theorem}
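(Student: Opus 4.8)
\emph{Overview.} The kernel of $\Phi_\theta$ is the universal $G$-constellation $\scU$ on $\scM_\theta\times\bC^2$, so that $\Phi_\theta(\scO_x)=E_x$, the $G$-constellation parametrised by a closed point $x\in\scM_\theta$. The plan is to prove the two assertions in turn: first that $\scM_\theta$ is smooth of dimension $2$ and that $\tau\colon\scM_\theta\to\bC^2/G$ is birational (hence a resolution), and then that $\Phi_\theta$ is fully faithful by checking the Bondal--Orlov / Bridgeland--Maciocia orthogonality criterion on the spanning class of skyscraper sheaves $\{\scO_x\}$.

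\emph{Resolution.} That $\tau$ is projective for generic $\theta$ is \cite[Proposition 2.2]{craw-ishii}. Over the locus of free $G$-orbits the only semistable $G$-constellation supported on an orbit $Gp$ is its structure sheaf $\scO_{Gp}$, which is simple and hence stable; so $\tau$ is an isomorphism over a dense open set and is birational. For smoothness I would use deformation theory: the tangent space at $[E]$ is $\Ext^1_G(E,E)$ and obstructions lie in $\Ext^2_G(E,E)$. Equivariant Serre duality on the surface $\bC^2$, whose canonical sheaf is $\omega_{\bC^2}\cong\scO\otimes\det^{-1}$ as a $G$-sheaf (here $\det$ is the determinant character of $G\subset\GL(2,\bC)$), gives $\Ext^i_G(A,B)\cong\Ext^{2-i}_G(B,A\otimes\det^{-1})^{*}$ for sheaves with proper support, so that $\Ext^2_G(E,E)\cong\Hom_G(E,E\otimes\det^{-1})^{*}$. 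Using the $G$-equivariant Koszul resolution of the diagonal, the Euler form contributes the factor $1-[V^{*}]+[\det^{-1}]$, which has virtual dimension $1-2+1=0$, so tensoring by the class $[\bC[G]]$ of the regular representation annihilates it and $\chi_G(E,E)=0$. With $\Hom_G(E,E)=\bC$ (stability) this yields $\dim\Ext^1_G(E,E)=1+\dim\Hom_G(E,E\otimes\det^{-1})$.

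\emph{Full faithfulness.} Granting that $\scM_\theta$ is smooth of dimension $2$, it suffices to verify for closed points $x,x'$: (O1) $\Hom_G(E_x,E_x)=\bC$; (O2) $\Ext^i_G(E_x,E_{x'})=0$ for all $i$ when $x\ne x'$; (O3) $\Ext^i_G(E_x,E_x)=0$ for $i<0$ and $i>2$. Here (O1) is stability and (O3) holds because $\bC[x,y]\rtimes G$ has global dimension $2$. For (O2), if $\tau(x)\ne\tau(x')$ the supports are disjoint and all Ext groups vanish; if $\tau(x)=\tau(x')$ then $\Hom_G(E_x,E_{x'})=0$ by stability (distinct stable objects of equal slope), the vanishing $\chi_G(E_x,E_{x'})=0$ forces $\dim\Ext^1_G(E_x,E_{x'})=\dim\Ext^2_G(E_x,E_{x'})$, and Serre duality identifies the latter with $\Hom_G(E_{x'},E_x\otimes\det^{-1})^{*}$. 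Thus both the smoothness count and (O2) reduce to controlling the twisted Hom-spaces $\Hom_G(E',E\otimes\det^{-1})$: showing this is $1$-dimensional when $E'=E$ (giving $\dim\Ext^1_G(E,E)=2$) and $0$ when $E'\ne E$ is a distinct stable constellation with the same support.

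\emph{Main obstacle.} The crux is exactly this twisted Hom computation, together with the vanishing of the obstruction map needed for genuine smoothness. The functor $-\otimes\det^{-1}$ sends a $\theta$-stable constellation to a $\theta^{\det}$-stable one, where $\theta^{\det}(\rho)=\theta(\rho\otimes\det)$, so one cannot invoke $\theta$-stability directly; there is always a canonical nonzero ``fundamental class'' in $\Hom_G(E,E\otimes\det^{-1})$ arising from $\wedge^2V^{*}=\det^{-1}$, and the real content is that it spans and that no cross-morphisms survive off the diagonal. I would attack this in one of two ways. The first uses the fibre-dimension bound $\dim\tau^{-1}(\mathrm{pt})\le 1$ together with the intersection-theorem technique behind \cite{craw-ishii}, which controls the off-diagonal Ext groups through $\dim\bigl(\scM_\theta\times_{\bC^2/G}\scM_\theta\bigr)$. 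The second, in the spirit of \cite{Ishii-Ito-Nolla}, embeds $G\hookrightarrow\SL(3,\bC)$ by $g\mapsto\mathrm{diag}(g,\det(g)^{-1})$ and realises $\bC^2$ as $\{z=0\}$ inside $\bC^3=\bC^2\oplus\bC_{\det^{-1}}$: the category $\coh^G\bC^3$ is Calabi--Yau of dimension $3$ (no $\det$-twist), the pushforwards $i_{*}E_x$ are the Fourier--Mukai images of skyscrapers for the crepant resolution $\scM_\theta(\bC^3)$, and $\scM_\theta(\bC^2)$ sits inside it as a union of connected components of the fixed locus of the $\bC^{*}$-action scaling $z$, hence is smooth; orthogonality of the $i_{*}E_x$ then descends to the $E_x$ via the divisor Ext sequence. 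The price of this second route, which must be paid before the $\bC^3$ equivalence can be used, is a genericity comparison: a parameter generic for $\bC^2$ need not be generic for $\bC^3$.
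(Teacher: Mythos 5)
Your first route is, modulo packaging, the paper's actual proof. The paper disposes of both assertions at once by invoking Theorem 1.3 of the first arXiv version of \cite{Bridgeland-King-Reid}, observing that here one has the inequality $\dim \scM_\theta \times_{(\bC^2/G)} \scM_\theta \le \dim \bC^2$, which is sharper than the hypothesis of \cite{Bridgeland-King-Reid}; this sharper bound is exactly what lets the intersection-theorem argument run \emph{without} triviality of the Serre functor, yielding smoothness, connectedness and full faithfulness simultaneously (see \cite[Theorem 6.2]{Ishii_MKG}). So your instinct that the off-diagonal $\Ext$ groups are controlled through $\dim\bigl(\scM_\theta\times_{\bC^2/G}\scM_\theta\bigr)$ is precisely the point, and your $\det$-twisted Serre duality analysis correctly isolates why naive stability arguments cannot kill $\Ext^2_G(E_x,E_{x'})\cong\Hom_G(E_{x'},E_x\otimes\det^{-1})^{*}$: this is the failure of Serre-functor triviality that the dimension bound compensates for. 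One structural remark: in the BKR scheme, smoothness is an \emph{output} of the same intersection-theorem argument, not an input supplied by deformation theory, so the obstruction-vanishing you list as part of the ``main obstacle'' never has to be checked separately; your plan inverts the logical order there, but once you delegate to the intersection-theorem machinery this causes no harm.

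On your second route, which the paper does not use for this theorem (though the $\SL(3,\bC)$ embedding is exactly how it proves Proposition \ref{prop:only_if} and Theorem \ref{thm:abelian}): the descent step is where it creaks, and slightly more than you acknowledge. Granting orthogonality of $i_{*}E_x$ and $i_{*}E_{x'}$ on the threefold, the divisor triangle $E\otimes\det[1]\to Li^{*}i_{*}E\to E$ (the conormal bundle of $\bC^2\subset\bC^2\oplus\bC_{\det^{-1}}$ is the character $\det$) gives isomorphisms $\Ext^{k+1}_G(E_x,E_{x'})\cong\Ext^{k-1}_G(E_x\otimes\det,E_{x'})$ for all $k$. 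This kills $\Hom$ and $\Ext^1$ off the diagonal, but for $k+1=2$ it merely identifies $\Ext^2_G(E_x,E_{x'})$ with $\Hom_G(E_x\otimes\det,E_{x'})$ --- your twisted Hom resurfaces, and stability says nothing since $E_x\otimes\det$ is only $\theta^{\det}$-stable (one must rule out, e.g., $E_{x'}\cong E_x\otimes\det$). The gap is closable with a tool you already wrote down: your Koszul computation gives $\chi_G(E_x,E_{x'})=0$ for \emph{any} pair of $G$-constellations, so once $\Hom$ and $\Ext^1$ vanish, $\Ext^2$ vanishes for free. Combined with the genericity perturbation you flag (legitimate, since genericity for $\bC^2$-constellations is an open condition, so a small perturbation making $\theta$ generic for $\bC^3$ leaves $\scM_\theta(\bC^2)$ unchanged; compare the perturbation in the proof of Proposition \ref{prop:only_if}), route two can indeed be completed, but it is genuinely longer than the paper's one-line appeal to \cite{Bridgeland-King-Reid}, and it quietly uses the $\bC^3$ equivalence whose proof rests on the same intersection-theorem technology anyway.
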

\begin{proof}
This is essentially Theorem 1.3 in the first arXiv version of \cite{Bridgeland-King-Reid}.
We have the inequality
$$
\dim \scM_\theta \times_{(\bC^2/G)} \scM_\theta \le \dim \bC^2
$$
which is sharper than the assumption in \cite{Bridgeland-King-Reid}.
This allows us to apply the argument of \cite{Bridgeland-King-Reid}
(without using the triviality of the Serre functors)
to show that $\Phi_\theta$ is fully faithful and that
$\scM_\theta$ is smooth and connected
(see \cite[Theorem 6.2]{Ishii_MKG}).
\end{proof}
The problem we consider is to characterize the resolutions $Y$ such that $Y \cong \scM_\theta$ for some generic $\theta$.

\section{The maximal resolution}
Let $G$ be a finite subgroup of $\GL(2, \bC)$,
which is not necessarily small,
i.e., the action may not be free on $\bC^2 \setminus \{0\}$.
Then the quotient variety $X=\bC^2/G$ is equipped with a boundary divisor $B$
determined by the equality $\pi^*(K_X+B)=K_{\bC^2}$.
More precisely, $B$ is expressed as
$$
B = \sum_j \frac{m_j-1}{m_j} B_j,
$$
where $B_j \subset X$ is the image of a one-dimensional linear subspace
whose point-wise stabilizer subgroup $G_j \subset G$ is cyclic of order $m_j$.
Note that $G$ is small
if and only if $B=0$.
Let $\tau: Y \to X$ be a resolution of singularities and write
\begin{equation}\label{eq:discrepancy}
K_Y+\tau_*^{-1}B\equiv \tau^*(K_X+B) + \sum_i a_i E_i,
\end{equation}
where $E_i$ are the exceptional divisors and $a_i \in \bQ$.
Recall that $(X, B)$ is a KLT pair (\cite[Proposition 5.20]{Kollar-Mori}), which implies $a_i > -1$ for all $i$.
Then among the resolutions $Y$ which satisfies $a_i \le 0$ for all $i$, there is a unique maximal one, as in \cite{Kollar-SB} (see also \cite[Theorem 17]{Kawamata_DMC}).
It is called the maximal resolution of $(X, B)$ and we denote it by $\Ymax$.

Notice that the system of inequalities $a_i \le 0$ is an inequality between canonical divisors.
According to the DK-hypothesis \cite{Kawamata_DMC},
the inequality should correspond to the embedding of derived categories in Theorem \ref{thm:fully_faithful} with $Y=\scM_\theta$.
Thus we make the following conjecture:
 
\begin{conjecture}\label{conjecture:main}
Let $G \subset \GL(2, \bC)$ be a finite subgroup and
consider the quotient $X=\bC^2/G$ with the boundary divisor $B$.
For any resolution of singularities $Y \to X$,
there is a generic $\theta \in \Theta$ with $Y \cong \scM_\theta$
if and only if there is a morphism $\Ymax \to Y$ over $X$.
Here $\Ymax$ is the maximal resolution of $(X,B)$.
\end{conjecture}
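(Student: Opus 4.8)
The plan is to prove the two implications of the conjecture separately: the ``only if'' direction can be handled uniformly for all finite $G$, while the ``if'' direction I would attack by reducing everything to the abelian case, via an iterated moduli construction.

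For the ``only if'' direction, suppose $Y \cong \scM_\theta$ for a generic $\theta$. I would embed $G$ into $\SL(3,\bC)$ by $g \mapsto \diag(g,(\det g)^{-1})$, so that $\bC^2/G$ appears inside $\bC^3/G$ as the divisor coming from $\{z=0\}$ and the latter becomes a Gorenstein canonical singularity. A generic moduli space $\scM_\theta(\bC^3)$ is then a crepant resolution of $\bC^3/G$, and I would match the exceptional valuations of $\scM_\theta(\bC^2) \to \bC^2/G$ with those appearing (crepantly, i.e.\ with discrepancy $0$) in $\scM_\theta(\bC^3)$. Because the discrepancies of the pair $(\bC^2/G, B)$ are computed by restricting, via adjunction, the zero discrepancies of the $3$-fold crepant resolution to the divisor lying over $\bC^2/G$, every exceptional $E_i$ of $Y$ satisfies $a_i \le 0$, so $Y$ is dominated by $\Ymax$. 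This is Proposition~\ref{prop:only_if}, and the fully faithful embedding of Theorem~\ref{thm:fully_faithful} is the conceptual (DK) reason such an inequality must hold (cf.\ \cite{Kawamata_DMC}).

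For the ``if'' direction I would first dispose of the abelian case. When $G$ is abelian the situation is toric: the resolutions dominated by $\Ymax$ correspond to the subdivisions of the relevant cone lying below the maximal one, and Theorem~\ref{thm:craw-ishii}, together with its ample-class refinement, realizes each such $Y$ as some $\scM_\theta$, yielding Theorem~\ref{thm:abelian}. For non-abelian small $G$ I would invoke the iterated construction of \cite{Ishii-Ito-Nolla}. Here $N=\langle -I\rangle$ is normal, the moduli of $N$-constellations is the minimal ($A_1$) resolution $S$ of $\bC^2/N$, and $G/N$ acts on $S$ with $S/(G/N)=\bC^2/G$. I would define $G/N$-constellations on the variety $S$, form their moduli $\scM_\sigma(S)$, and identify $\scM_\sigma(S)$ with a $G$-constellation moduli $\scM_\theta(\bC^2)$ through the iterated formalism. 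Since $G/N$ acts on the exceptional $\bP^1\subset S$ as a finite subgroup of $\Aut(\bP^1)$, all point stabilizers are cyclic, so $S/(G/N)$ carries only cyclic quotient singularities; locally around each of these the realization problem is exactly the abelian one already settled.

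The main obstacle will be the step to be carried out in \S\ref{subsec:key}: the precise description of the space of stability parameters for $G/N$-constellations on $S$ and the matching of its chambers with the birational geometry. One has to show that the linear map sending $\sigma$ to its ample class surjects onto enough of $\Pic$ that every $Y$ dominated by $\Ymax$ is hit by some chamber, while no chamber produces a resolution strictly exceeding $\Ymax$ (which the ``only if'' bound forbids). Because the parameter space for $G/N$ over the \emph{non-affine} surface $S$ is genuinely richer than the affine abelian one---the tautological bundles now live on $S$ and the $G/N$-action glues the local cyclic pictures together---assembling the local abelian chamber structures into a single global structure compatible with $\Ymax$ is where the real difficulty lies. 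Finally, passing from small $G$ to the full (non-small, $B\neq 0$) conjecture would mean running the same reduction when $N$ need no longer equal $\langle -I\rangle$ and when the boundary $B$ itself feeds into the discrepancies; I expect this to be the genuinely open part.
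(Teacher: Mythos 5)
Your outline reproduces the paper's architecture essentially step for step: the ``only if'' direction via the embedding $G \hookrightarrow \SL(3,\bC)$ and the discrepancy computation on a crepant resolution of $\bC^3/G$ (Proposition~\ref{prop:only_if}, where the actual work is the identification of the principal divisor $(z^n)$ and the coefficient $\tfrac{n(m_j-1)}{m_j}$ along the divisors over $B_j$, which your sketch glosses over but does not contradict); the abelian case via Theorem~\ref{thm:craw-ishii} and the toric Proposition~\ref{prop:ample} (Theorem~\ref{thm:abelian}); the non-abelian small case via the iterated construction with $N=\langle -I\rangle$ (Theorem~\ref{thm:iterated_construction} and Proposition~\ref{prop:-I}); and, like the paper, you leave the non-small case genuinely open --- the paper proves the conjecture only for $G$ abelian or small (Theorem~\ref{thm:small}).

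The genuine gap is the step you yourself flag but do not carry out: constructing a single global parameter $\xi$ from the three local abelian ones. The paper resolves this in \S\ref{subsec:key} by proving the exact sequence $0 \to R(\Zbar)^{\oplus 2} \to R(\Gbar_1)\oplus R(\Gbar_2)\oplus R(\Gbar_3) \to F_0K(\coh^{\Gbar}_C(Y_N)) \to 0$, whose dual (Corollary~\ref{cor:F_0^*}) says that a stability parameter, up to the part irrelevant for $0$-dimensional sheaves, is exactly a triple $(\theta_1,\theta_2,\theta_3)$ of local parameters subject to the single compatibility $\theta_1|_{\Zbar}=\theta_2|_{\Zbar}=\theta_3|_{\Zbar}$; the rank count underlying exactness rests on the McKay correspondence for the binary polyhedral group $H$ (equation~\eqref{eq:rankR(H)}). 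The mechanism that makes the compatibility achievable --- which your sketch does not identify --- is the surjectivity clause of Proposition~\ref{prop:ample} (restriction $\Amp(U_{\Sigma_k}) \to \Amp(W)$ is onto, $W$ the minimal resolution of $T'/\Zbar$) combined with the ample-class refinement of Theorem~\ref{thm:craw-ishii}: one fixes $l_{\theta^{\Zbar}}$, chooses $l_k$ restricting to it, and then $\theta_k$ with $[\varphi_{\theta_k}^*(\theta_k)]=l_k$ automatically restricts to $\theta^{\Zbar}$ on $R(\Zbar)$. Without these two ingredients, ``assembling the local abelian chamber structures'' remains a wish rather than an argument. A minor slip in your reduction: the stabilizers $\Gbar_k$ are abelian but need not be cyclic (they are extensions of the cyclic $\Gbar_k/\Zbar$ by the central $\Zbar$), so the local realization problem is the general abelian one with boundary --- requiring the full strength of Theorem~\ref{thm:abelian} --- not merely a cyclic quotient ($A_n$-type) case, even though the underlying surface singularities of $Y_N/\Gbar$ are indeed cyclic.
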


\section{``Only if'' part}
In this section, we show the ``only if'' part of Conjecture \ref{conjecture:main}.
Embed $\GL(2, \bC)$ into $\SL(3, \bC)$ by
sending a matrix $A \in \GL(2, \bC)$ to
$\begin{pmatrix} A & 0 \\ 0 & \det(A)^{-1} \end{pmatrix}$.
Then for $\theta \in \Theta$, we can consider the moduli space $\scM_\theta(\bC^3)$ of $\theta$-stable $G$-constellations on $\bC^3$ with respect to the action of $G$ on $\bC^3$.
\begin{lemma}\label{lem:embed_to_M_theta(C^3)}
For any $\theta \in \Theta$, there is a closed embedding $\scM_\theta \hookrightarrow \scM_\theta(\bC^3)$ which fits into the commutative diagram
\[
    \begin{tikzcd}
    \scM_\theta \arrow[swap]{d}{  }\arrow[r, hookrightarrow]{  } & \scM_\theta(\bC^3) \arrow{d}{  }\\
    \bC^2/G \arrow[r, hookrightarrow]{  } & \bC^3/G.
    \end{tikzcd}
\]
Moreover, if $\theta$ is generic for $G$-constellations on $\bC^3$, then
the vertical arrows are projective and hence are resolutions of singularities.
\end{lemma}
\begin{proof}
Recall that the universal family of $G$-constellations on $\bC^3$ is given by
the tautological bundles $\{\scR_\rho\}_{\rho \in \Irr G}$ and the $G$-equivariant morphism
\begin{align}\label{eq:universal_rep}
\bigoplus_\rho \scR_\rho \otimes_{\bC} \rho \longrightarrow
\bC^3 \otimes \left(\bigoplus_\rho \scR_\rho \otimes_{\bC} \rho\right).
\end{align}
If $\rhonat$ denotes the representation given by $G \subset \GL(2, \bC)$,
then $\bC^3$ above is $\rhonat \oplus \det \rhonat^*$.
Taking the third coordinate of $\bC^3$ in \eqref{eq:universal_rep}
we obtain a morphism
\[
z_\rho: \scR_\rho \to \scR_{\rho \otimes \det\rhonat}
\]
for each $\rho$.
It is straightforward that the scheme theoretic intersection of the zero loci of $z_\rho$'s
is isomorphic to $\scM_\theta$.
Hence $\scM_\theta$ is a closed subscheme of $\scM_\theta(\bC^3)$.
Moreover, we can see that the composite $\scM_\theta \hookrightarrow \scM_\theta(\bC^3) \to \bC^3/G$ factors through $\bC^2/G$.
If $\theta$ is generic for $G$-constellations on $\bC^3$, then it is also generic for $G$-constellations on $\bC^2$, from which the projectivities of the vertical arrows follow.
\end{proof}
Now let us prove the ``only if'' part.
\begin{proposition}\label{prop:only_if}
If $\theta$ is generic, then there is a morphism $\Ymax \to \scM_\theta$ over $X$.
\end{proposition}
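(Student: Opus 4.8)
The plan is to realize the surface $\scM_\theta$ as a smooth divisor inside a crepant resolution of an auxiliary three-fold quotient, and then read off the discrepancies $a_i$ by adjunction. First I would embed $G$ into $\SL(3,\bC)$ by $g \mapsto \diag\!\big(g,(\det g)^{-1}\big)$, so that $G$ acts on $\bC^3 = \bC^2 \oplus \bC_z$ with the extra coordinate $z$ transforming by the character $\det^{-1}$; since $G$ maps into $\SL(3)$, the form $dx\wedge dy\wedge dz$ is invariant and $K_{\bC^3/G}\sim 0$. Pushing forward along the $G$-equivariant closed immersion $\bC^2=\{z=0\}\hookrightarrow\bC^3$ sends a $G$-constellation $E$ on $\bC^2$ to a $G$-constellation $\iota_*E$ on $\bC^3$ supported on $\{z=0\}$; because $G$-equivariant subsheaves and the class $[H^0(-)]\in R(G)$ are preserved, $\theta$-stability is preserved. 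Choosing a generic three-fold parameter $\theta'$ compatible with $\theta$, this gives a closed immersion $j:\scM_\theta(\bC^2)\hookrightarrow W:=\scM_{\theta'}(\bC^3)$ whose image $S$ is the locus of constellations scheme-theoretically supported on $\{z=0\}$. By the fact recalled in the introduction (for $G\subset\SL(3,\bC)$ the space $\scM_{\theta'}(\bC^3)$ is a crepant resolution of $\bC^3/G$ at generic $\theta'$; see \cite{Bridgeland-King-Reid}), $W$ is smooth with $K_W=\tau_3^*K_{\bC^3/G}\sim 0$, where $\tau_3:W\to\bC^3/G$ is the support morphism, and $S\cong\scM_\theta(\bC^2)$ is a smooth surface, hence a divisor in $W$, with $\tau_3|_S$ equal to $\tau$ under $j$.

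Next I would identify $S$ with the strict transform $\widetilde{D}_0$ of the prime divisor $D_0:=\{z=0\}/G\subset\bC^3/G$. Both are irreducible and dominate $D_0\cong X$, and over the free locus a $\{z=0\}$-supported constellation is just the structure sheaf of its orbit, so $S$ and $\widetilde{D}_0$ coincide generically, hence as divisors. The role of this identification is a downstairs adjunction computation: the different of $D_0$ records, along each image $B_j$ of a fixed line of order $m_j$, the local model $\bC\times A_{m_j-1}$ in which $D_0$ cuts out a curve with different coefficient $(m_j-1)/m_j$, so that
$$
\big(K_{\bC^3/G}+D_0\big)\big|_{D_0}=K_X+\mathrm{Diff}_{D_0}(0)=K_X+B ,
$$
matching precisely the boundary divisor $B$ of the pair $(X,B)$.

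The discrepancy estimate is then immediate. Since $\bC^3/G$ is $\bQ$-factorial and $D_0$ is effective, write $\tau_3^*D_0=S+F$ with $F=\sum_k c_kF_k$ the $\tau_3$-exceptional part, $c_k\ge 0$. Using $K_W\sim\tau_3^*K_{\bC^3/G}$, adjunction on the smooth divisor $S\subset W$ gives
$$
K_S=(K_W+S)\big|_S=\tau_3^*\big(K_{\bC^3/G}+D_0\big)\big|_S-F|_S=\tau^*(K_X+B)-F|_S ,
$$
where the last equality uses $S=\widetilde{D}_0\to D_0\cong X$ together with the different computation. Comparing with the defining relation \eqref{eq:discrepancy}, namely $K_S+\tau_*^{-1}B=\tau^*(K_X+B)+\sum_i a_iE_i$, I obtain $\sum_i a_iE_i=\tau_*^{-1}B-F|_S$. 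Now $F|_S$ is effective and $\tau_*^{-1}B$ is not $\tau$-exceptional, so comparing the coefficient of each exceptional prime $E_i$ yields $a_i=-(\text{coefficient of }E_i\text{ in }F|_S)\le 0$. Hence every exceptional divisor of $\scM_\theta=S$ has discrepancy $\le 0$ with respect to $(X,B)$, and by the universal property of the maximal resolution this produces a morphism $\Ymax\to\scM_\theta$ over $X$.

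The main obstacle I expect is the first step: making the parameter matching precise so that, for a \emph{given} generic two-fold $\theta$, there is a generic three-fold $\theta'$ for which the $\{z=0\}$-supported locus $S\subset\scM_{\theta'}(\bC^3)$ is exactly $\scM_\theta(\bC^2)$ over $X$ (a two-fold-generic parameter may lie on a three-fold wall, so one must perturb $\theta'$ inside the correct three-fold chamber and check that the induced two-fold stability, and hence the identification of $S$ with $\scM_\theta(\bC^2)$, is unchanged), together with verifying that $S$ is genuinely the smooth strict transform $\widetilde{D}_0$ and does not acquire spurious components over the singular locus. By contrast, the different computation, while geometric, is standard once the local model $\bC\times A_{m_j-1}$ along each $B_j$ is in hand, and it is precisely what lets the crepancy of the three-fold convert into the inequalities $a_i\le 0$ defining the maximal resolution.
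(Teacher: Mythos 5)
Your overall strategy is the same as the paper's: embed $G$ into $\SL(3,\bC)$, realize $\scM_\theta(\bC^2)$ as a divisor $S$ inside the three-fold crepant moduli space $W=\scM_{\theta'}(\bC^3)$ (the paper perturbs $\theta$ itself rather than introducing a separate $\theta'$, which disposes of the ``main obstacle'' you flag in one line, since genericity on the two-fold side is an open condition unaffected by a small perturbation), and convert the crepancy of $W$ into the inequalities $a_i\le 0$. The paper does the conversion with the explicit invariant function $z^n$ and its principal divisor \eqref{eq:z^n}, whereas you repackage it as log adjunction with the different $\mathrm{Diff}_{D_0}(0)=B$ computed downstairs; these are genuinely equivalent computations, and your downstairs different calculation via the transverse $A_{m_j-1}$ model is correct.

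However, your final step has a genuine gap. The relation you derive, $\sum_i a_iE_i \equiv \tau_*^{-1}B - F|_S$, is only a numerical (or at best $\bQ$-linear) equivalence relative to $X$, and you cannot ``compare the coefficient of each exceptional prime'' in such a relation: writing $F|_S=\sum_j h_j\,\tau_*^{-1}B_j + V$ with $V$ effective and $\tau$-exceptional, the relation only says that $\sum_i a_iE_i + V - \sum_j\bigl(\tfrac{m_j-1}{m_j}-h_j\bigr)\tau_*^{-1}B_j$ has zero intersection with every exceptional curve, which determines nothing unless you first know $h_j=\tfrac{m_j-1}{m_j}$ exactly. That multiplicity statement — the coefficient of the divisor of $W\to\bC^3/G$ over $B_j$ meeting $S$ in $\tau_3^*D_0$ is $\tfrac{m_j-1}{m_j}$, the intersection is precisely $\tau_*^{-1}B_j$, and every other component of $F\cap S$ is $\tau$-exceptional — is exactly the content of the paper's claim \eqref{eq:z^n}, and it is the technical heart of the proof: the paper establishes it by identifying an analytic neighborhood of $B_j\setminus\{0\}$ with a quotient by the abelian stabilizer $K$ of the line $\Btilde_j$ and checking the crepant resolution torically. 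Your different computation on $(\,\bC^3/G, D_0)$ does not substitute for this, since it lives downstairs and says nothing about how the exceptional divisors of $W$ over $B_j$ meet $S$; you need the analytic-local product structure of the crepant resolution along the generic point of $B_j$ (minimal resolution of $A_{m_j-1}$ times a disc), from which the strict transform of the ruling meets only the first curve of the chain, whose coefficient in $\tau_3^*D_0$ is $\tfrac{m_j-1}{m_j}$. Once that is supplied, negative definiteness of the exceptional intersection matrix (or the honest divisor identity coming from the function $z^n$, as in the paper) turns the numerical relation into $a_i=-(\text{coefficient of }E_i\text{ in }V)\le 0$, and the rest of your argument is sound.
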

\begin{proof}
Putting $Y=\scM_\theta$,
we show that $a_i \le 0$ for all $i$ in \eqref{eq:discrepancy}.
Embed $G$ into $\SL(3, \bC)$ and consider $U:=\scM_{\theta}(\bC^3)$,
the moduli space of $\theta$-stable $G$-constellations on $\bC^3$.
Here, we may assume that $\theta$ is generic for $G$-constellations on $\bC^3$
by slightly perturbing $\theta$ if necessary.
Then $U$ is a crepant resolution of $\bC^3/G$ containing $Y$ by Lemma \ref{lem:embed_to_M_theta(C^3)} and therefore we have
\begin{equation}\label{eq:K_Y}
K_Y\cong \scO_U(Y)|_Y.
\end{equation}
Let $z$ be the coordinate function of $\bC^3$ such that $\bC^2\subset \bC^3$ is defined by $z=0$.
Then $z^n$ is invariant under the action of $G$ where $n$ is the order of $G$.
We claim that the principal divisor $(z^n)$ on $U$ is of the form
\begin{equation}\label{eq:z^n}
(z^n)=nY + \sum_j \frac{n(m_j-1)}{m_j} B'_j + \sum_k d_k D_k
\end{equation}
where $B'_j, D_k\subset U$ are prime divisors such that
$B'_j \cap Y = \tau_*^{-1} B_j$ and $D_k \cap Y$ is contained in the exceptional locus
of $Y \to \bC^2/G$ (or empty).
This is saying that there exists an exceptional prime divisor
$B'_j$ of $U\to \bC^3/G$ lying over $B_j$ with $B'_j \cap Y = \tau_*^{-1} B_j$
and that its coefficient in $(z^n)$ is $\frac{n(m_j-1)}{m_j}$.
We may check this over the complete local ring
$\widehat{\cO}_{\bC^3/G,P}$ at a point $P \in B_j \setminus \{0\}$. 
Since $G_j$ is the stabilizer subgroup of a point of $\bC^3$ lying over $P$, there is an isomorphism of complete local rings:
\[
\widehat{\cO}_{\bC^3/G, P} \cong \widehat{\cO}_{\bC^3/G_j, [0]}.
\]
Let $\Btilde_j$ be a line in $\bC^2$ mapped to $B_j$
and take a $G_j$-invariant linear subspace $\Btilde_j^\perp$
of $\bC^3$ such that
\[
\bC^3=\Btilde_j \times \Btilde_j^\perp.
\]
Then $G_j \cong \bZ/m_j \bZ$ is a subgroup of $\{1\}\times \SL(\Btilde_j^\perp)$
and therefore we have
\[
\bC^3/G_j\cong \Btilde_j \times (\Btilde_j^{\perp} /G_j),
\]
where $\Btilde_j^\perp/G_j$ is a rational double point of type $A_{m_j-1}$.
Thus we can see that on the crepant resolution
\[
U \times_{(\bC^3/G)} \Spec \widehat{\cO}_{\bC^3/G, P}
\to \Spec \widehat{\cO}_{\bC^3/G, P} \cong \Spec \widehat{\cO}_{\bC^3/G_j, [0]},
\]
there is a prime divisor $\widehat B_j'$ with desired properties
such that
the coefficient of $\widehat B_j'$ in the divisor $(z^{m_j})$ is $m_j-1$.
Since $m_j$ divides $n$, this proves \eqref{eq:z^n}.

From \eqref{eq:K_Y} and \eqref{eq:z^n}, we obtain
$$
K_Y+\tau_*^{-1}B \equiv -\sum \frac{d_k}{n}(D_k \cap Y).
$$
Here, note that $z^n$ is a regular function and therefore the coefficients 
in \eqref{eq:z^n} are all non-negative.
Especially, we have $d_k \ge 0$ for all $k$.
This proves the assertion since $K_X+B\in\Pic(X) \otimes \bQ=0$
in \eqref{eq:discrepancy}.
\end{proof}

\section{Abelian group case}
Let $G\subset \GL(2, \bC)$ be a finite abelian subgroup of order $n$.
As in the previous section, we  embed $G\subset \GL(2, \bC)$ into $\SL(3, \bC)$.

\begin{theorem}\label{thm:abelian}
Conjecture \ref{conjecture:main} is true if $G$ is abelian.
\end{theorem}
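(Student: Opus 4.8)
The plan is to reduce the abelian case of Conjecture \ref{conjecture:main} to the three-dimensional result of \cite{craw-ishii} recalled as Theorem \ref{thm:craw-ishii}. Since $G \subset \GL(2,\bC)$ is abelian, it embeds into $\SL(3,\bC)$, and for generic $\theta$ the three-dimensional moduli space $U:=\scM_\theta(\bC^3)$ is a projective crepant resolution of $\bC^3/G$. The two-dimensional moduli space $\scM_\theta(\bC^2)$ sits inside $U$ as the locus supported on $\bC^2/G \subset \bC^3/G$, and the key geometric fact (already used in the proof of Proposition \ref{prop:only_if}) is that an analytic neighborhood of $Y=\scM_\theta(\bC^2)$ in $U$, together with the divisor $Y$ and the principal divisor $(z^n)$, governs the discrepancies $a_i$. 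The ``only if'' direction is Proposition \ref{prop:only_if}, so only the ``if'' direction remains: given a resolution $\tau:Y \to X$ with a morphism $\Ymax \to Y$ over $X$, I must produce a generic $\theta$ with $Y \cong \scM_\theta(\bC^2)$.

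\textbf{First} I would describe the set of two-dimensional resolutions arising as $\scM_\theta(\bC^2)$ via the three-dimensional picture. For each chamber $C$ in the three-dimensional parameter space, $U_C=\scM_C(\bC^3)$ is a projective crepant resolution of $\bC^3/G$, and intersecting with the strict transform of $\bC^2/G$ (equivalently, taking the fiber over the two-dimensional stratum) yields a resolution $Y_C$ of $X=\bC^2/G$. By Theorem \ref{thm:craw-ishii}, \emph{every} projective crepant resolution of $\bC^3/G$ is realized as some $U_C$, and moreover any ample class on $U_C$ is realized by a parameter in $C$. The condition that $Y$ is dominated by $\Ymax$ should translate, via the discrepancy computation in \eqref{eq:z^n}, into the condition that the strict transform of $\bC^2/G$ in some crepant resolution $U$ of $\bC^3/G$ restricts to exactly the divisors $E_i$ with $a_i \le 0$; the maximal resolution $\Ymax$ corresponds to taking all exceptional divisors with $a_i \le 0$, and resolutions $Y$ dominated by $\Ymax$ correspond to contracting some of these.

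\textbf{The main step} is then a partial-resolution / variation-of-GIT argument: starting from the given $Y$ with $\Ymax \to Y \to X$, I would realize $Y$ as the restriction to $\bC^2/G$ of a suitable (not necessarily crepant, but appropriately chosen) torus-equivariant resolution $U$ of $\bC^3/G$, and then invoke Theorem \ref{thm:craw-ishii} to find a chamber $C$ and a parameter $\theta \in C$ with $U \cong \scM_C(\bC^3)$ realizing a chosen ample class; the restriction of the universal family to $\bC^2$ then exhibits $Y \cong \scM_\theta(\bC^2)$. Genericity of $\theta$ for two-dimensional $G$-constellations follows from genericity in the three-dimensional problem after a small perturbation, exactly as in Proposition \ref{prop:only_if}. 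Because everything here is abelian, all the resolutions, divisors, and discrepancies are computable by the toric method, so the existence of the required $U$ and the translation of ``dominated by $\Ymax$'' into a statement about which exceptional divisors appear are combinatorial facts about fans.

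\textbf{The hard part} will be the precise matching between the two-dimensional divisors $E_i$ with discrepancy $a_i \le 0$ and the three-dimensional exceptional divisors of $U$ that meet the strict transform of $\bC^2/G$, i.e.\ verifying that ``$Y$ dominated by $\Ymax$'' is exactly the image under restriction of the class of crepant resolutions $U$ of $\bC^3/G$. One must check that contracting a divisor $E_i$ with $a_i \le 0$ downstairs is induced by a genuine variation of the three-dimensional chamber structure (a type I or III wall-crossing in the sense of \cite[Theorem 5.9]{craw-ishii}), and that no divisor with $a_i > 0$ is ever forced to appear. This is where the toric combinatorics and the chamber/wall analysis of \cite{craw-ishii} must be combined carefully; the rest of the argument is then a direct application of Theorem \ref{thm:craw-ishii}.
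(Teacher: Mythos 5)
Your overall route coincides with the paper's: by Proposition \ref{prop:only_if} only the ``if'' direction is open; you embed $G\subset\SL(3,\bC)$, seek a projective crepant resolution $U\to\bC^3/G$ containing $Y$, invoke Theorem \ref{thm:craw-ishii} to get a generic $\theta$ with $U\cong\scM_\theta(\bC^3)$, and conclude $Y\cong\scM_\theta(\bC^2)\subset\scM_\theta(\bC^3)$. (The ample-class refinement of Theorem \ref{thm:craw-ishii} is not needed here; the paper uses the surjectivity of $\Amp(U)\to\Amp(W)$ only later, in the non-abelian case.) Your translation of the hypothesis is also the right one: with $v_i=(\alpha_i,\beta_i)$ the rays of the fan of $Y$, the discrepancy is $a_i=\alpha_i+\beta_i-1$, so ``$Y$ dominated by $\Ymax$'' says exactly that every $v_i$ lies in the projected junior simplex $\Delta'$, i.e.\ lifts to a lattice point $w_i$ of $\Delta$, and a crepant $U_\Sigma$ contains $Y$ precisely when the vertices joined to $e_3$ in the basic triangulation $\Sigma$ are $w_0,\dots,w_s$.

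The genuine gap is that the decisive step --- the existence of such a $U$ --- is exactly what you defer to ``combinatorial facts about fans''; this is the paper's Proposition \ref{prop:ample}, and it is not automatic. One must produce a basic triangulation of $\Delta$ with the prescribed star at $e_3$ which is moreover coherent, so that $U_\Sigma\to\bC^3/G$ is \emph{projective}; without projectivity Theorem \ref{thm:craw-ishii} does not apply. The paper proves this by induction on $|G|$: star-subdivide $\Delta$ at a lift $w$ whose $e_3$-coefficient is minimal (so that all the $w_i$ land in two of the three subtriangles), triangulate the pieces by the induction hypothesis, and deduce projectivity over $\bC^3/G$ from projectivity over the star subdivision. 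Your proposed substitute --- checking that contracting an $E_i$ with $a_i\le 0$ is induced by a type I/III wall-crossing in the chamber structure of \cite{craw-ishii} --- is circular: the wall-and-chamber analysis describes how the crepant models already realized as $\scM_C$ vary, and determining which surfaces $Y$ occur as their restrictions is again the triangulation-existence problem. There is also an internal inconsistency in your main step: you allow $U$ to be ``not necessarily crepant'' and then invoke Theorem \ref{thm:craw-ishii}, which is a statement only about projective crepant resolutions of $\bC^3/G$; $U$ must be crepant, and this is compatible with $a_i<0$ downstairs since the relevant divisors of $U$ merely cut $Y$ in exceptional curves. Finally, a small simplification in your favor: no perturbation is needed for two-dimensional genericity, since a strictly $\theta$-semistable $G$-constellation on $\bC^2$ is in particular one on $\bC^3$, so $\theta$ generic in dimension three is automatically generic in dimension two; the perturbation in Proposition \ref{prop:only_if} goes in the opposite direction.
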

\begin{proof}
It is sufficient to prove the ``if'' part by Proposition \ref{prop:only_if}.
Let $Y \to X=\bC^2/G$ be a resolution which is dominated by $\Ymax$.
By Proposition \ref{prop:ample} below, there is a projective crepant resolution $U \to \bC^3/G$ such that $Y \subset U$.
Then \cite{craw-ishii} ensures that there is a generic parameter $\theta$ such that
$U \cong \scM_{\theta}(\bC^3)$.
Then $\scM_\theta(\bC^2)$ is isomorphic to $Y$ by Lemma \ref{lem:embed_to_M_theta(C^3)}.
\end{proof}

Before stating the proposition, we need some notation.
We diagonalize $G$ and write
$$
g=\diag (\zeta_n^{a_g}, \zeta_n^{b_g})
$$
for $g \in G$
where $\zeta_n$ is a primitive $n$-th root of unity.
Put
\begin{align*}
N_2:&= \bZ^2 + \sum_{g \in G} \bZ \cdot \frac{1}{n}(a_g, b_g),\\
N_3:&= \bZ^3 + \sum_{g \in G} \bZ \cdot \frac{1}{n}(a_g, b_g, -a_g-b_g)
\end{align*}
which are the lattices of one-parameter subgroups for the toric varieties
$\bC^2/G$ and $\bC^3/G$ respectively.
The {\it junior simplex} $\Delta \subset (N_3)_\bR$ is the triangle with vertices $e_1, e_2, e_3$ where $\{e_1, e_2, e_3\}$ is the basis of $\bZ^3$ with $e_1, e_2 \in \bZ^2$.
A crepant resolution $U$ corresponds to a basic triangulation of $\Delta$.
For  a basic triangulation $\Sigma$ of $\Delta$,
let $U_\Sigma$ be the corresponding crepant resolution.

Consider the natural projection
$$
p_{12}:N_3 \to N_2
$$
and put $\Delta':=p_{12}(\Delta) \cong \Delta$.
Let $e_i' \in (\bR_{\ge 0})e_i \cap N_2$ be the primitive vector
and write $e_i=m_i e_i'$ for $i=1,2$.
If $B_i \subset \bC^2/G$ denote the divisor corresponding to $e_i'$,
then
$$B:=\frac{m_1-1}{m_1}B_1 + \frac{m_2-1}{m_2}B_2$$
is the boundary divisor for the quotient $\bC^2/G$.
A resolution $Y$ of $\bC^2/G$ 
is given by choosing primitive vectors
$v_0, v_1, \dots, v_s$ of $(\bZ_{\ge 0})^2 \cap N_2$
such that $v_0 =e_1'$, $v_s=e_2'$ and $\{v_{i-1}, v_i\}$ is a basis of $N_2$
for $i=1,\dots, s$.
If $E_i$ denotes the exceptional divisor corresponding to $v_i$ for $i=1, \dots, s-1$,
then the discrepancy $a_i$ of $E_i$ for the pair $(X, B)$ is $\alpha_i + \beta_i-1$ where $v_i=(\alpha_i, \beta_i)$.
Therefore, $Y$ is dominated by the maximal resolution $\Ymax$ of $(X, B)$ if and only if all of $v_1, \dots, v_{s-1}$ are in $\Delta'$.

Let $G_{(1,0)} \subset G$ be the stabilizer subgroup of $(1,0)  \in \bC^2 = \bC^2 \times \{0\} \subset \bC^3$.
Then $G_{(1,0)}$ acts on $\{1\}\times \bC^2 \cong \bC^2$ as a subgroup of $\SL(2)$
and the quotient $(\{1\}\times\bC^2)/G_{(1,0)}$ is a closed subvariety of $\bC^3/G$.
Let
$$W \to (\{1\}\times\bC^2)/G_{(1,0)}$$
be the minimal resolution.
Notice that $W$ is contained
in any crepant resolution $U$ of $\bC^3/G$
since  $(\{1\}\times\bC^2)/G_{(1,0)} \subset \bC^3/G$
is transversal to the one-dimensional stratum $(\bC^\times \times \{(0, 0)\})/G$.
Now we prove the following proposition.
The surjectivity of the ample cones will be used in the proof of the main theorem.

\begin{proposition}\label{prop:ample}
Let $Y \to \bC^2/G$ be a resolution dominated by $\Ymax$.
Then there is a projective crepant resolution
$U=U_\Sigma \to \bC^3/G$ containing $Y$
such that the restriction map $\Amp(U) \to \Amp(W)$  of the ample cones 
is surjective.
\end{proposition}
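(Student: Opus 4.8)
The plan is to realize $Y$ as the strict transform of $\bC^2/G=\{z=0\}/G$ inside a carefully chosen crepant resolution $U=U_\Sigma$, and then to use the freedom in the triangulation $\Sigma$ away from the vertex $e_3$ to force the ample cones to match. First I would record the key lattice fact: $p_{12}$ restricts to a bijection $N_3\cap\Delta\to N_2\cap\Delta'$. Indeed $p_{12}\colon N_3\to N_2$ is surjective with kernel $\bZ e_3$, and since $e_3$ lies at height one in the affine hyperplane carrying $\Delta$, two lattice points of $\Delta$ projecting to the same point of $\Delta'$ would differ by a nonzero multiple of $e_3$ and hence lie at different heights; surjectivity onto $N_2\cap\Delta'$ follows by lifting and correcting the height, using that $p_{12}$ maps $\Delta$ affinely and bijectively onto $\Delta'$. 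Because $Y$ is dominated by $\Ymax$, each ray $v_i$ of $Y$ lies in $\Delta'$ and so has a unique lift $w_i\in N_3\cap\Delta$, with $w_0\in[e_1,e_3]$, $w_s\in[e_2,e_3]$, and $w_1,\dots,w_{s-1}$ interior.

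Next I would build $\Sigma$. Put $T_i=\Conv(e_3,w_{i-1},w_i)$. Since $p_{12}$ sits in a split exact sequence $0\to\bZ e_3\to N_3\to N_2\to0$ and $\{v_{i-1},v_i\}$ is a $\bZ$-basis of $N_2$ (this is part of the data defining $Y$), the triple $\{e_3,w_{i-1},w_i\}$ is a $\bZ$-basis of $N_3$, so each $T_i$ is a unimodular empty triangle. The $T_i$ tile the portion of $\Delta$ lying over $\Conv(0,v_0,\dots,v_s)$ and contain no lattice point besides $e_3$ and the $w_i$; it then remains to triangulate the complementary lattice polygon $R$ unimodularly using all its lattice points, which always exists in dimension two and involves no edge to $e_3$. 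Coning from the origin gives a unimodular triangulation $\Sigma$ of $\Delta$, hence a crepant resolution $U_\Sigma$ (recall crepant resolutions are exactly the unimodular triangulations, so all lattice points of $\Delta$, in particular all of $[e_2,e_3]$, become rays). By construction the star of $e_3$ projects under $N_3\to N_3/\bZ e_3\cong N_2$ onto the fan of $Y$, so the strict transform of $\{z=0\}/G$ is $Y$ and $Y\subset U_\Sigma$.

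For the ample cones I would first describe $W$ combinatorially: it resolves the transverse $A_{m_1-1}$-singularity $\bC^2/Z$ along the $x$-axis, its exceptional curves $C_1,\dots,C_{m_1-1}$ correspond to the interior lattice points of the boundary edge $[e_2,e_3]$, and $\Amp(W)=\{\,l:l\cdot C_i>0\ \forall i\,\}$ is a simplicial orthant. Writing line bundles on the smooth toric $U_\Sigma$ as piecewise-linear support functions on the fan, with ampleness $=$ strict convexity, the map $\Amp(U_\Sigma)\to\Amp(W)$ is restriction of support functions, and surjectivity means extending a function that is strictly convex in the $W$-directions to one strictly convex on all of $\Sigma$. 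The device is a support function $\phi_0$ on $U_\Sigma$ with $\phi_0\cdot C_i=0$ for all $i$ (equivalently $\phi_0$ is linear along $\Conv(e_2,e_3)$, so $\phi_0|_W$ is numerically trivial) and $\phi_0\cdot C>0$ for every other invariant curve $C$; such a $\phi_0$ exists exactly when $\Sigma$ is a projective (regular) refinement of the fan obtained by deleting the walls dual to $C_1,\dots,C_{m_1-1}$. Granting $\phi_0$, for any $\psi_W\in\Amp(W)$ I would choose an extension $\widehat\psi$ with $\widehat\psi|_W=\psi_W$ (possible because $\Pic(U_\Sigma)\to\Pic(W)$ is onto) and set $\psi=\widehat\psi+t\phi_0$: across the wall of each $C_i$ one has $\psi\cdot C_i=\psi_W\cdot C_i>0$, while across every remaining wall $\phi_0$ is strictly convex, so for $t\gg0$ the function $\psi$ is strictly convex, hence ample, and $\psi|_W=\psi_W$; this gives surjectivity.

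The hard part is the simultaneous combinatorial construction behind $\phi_0$: one must triangulate $R$ so that $\Sigma$ is at once a unimodular triangulation of $\Delta$ carrying the prescribed star of $e_3$ (to keep $Y\subset U_\Sigma$) and a regular refinement of the fan in which the curves $C_i$ over $[e_2,e_3]$ are contracted (to produce the flat-along-$W$, otherwise strictly convex $\phi_0$). I would construct $\phi_0$ by hand: fix a linear functional realizing flat values on the collinear lattice points of $[e_2,e_3]$, assign sufficiently negative values at $e_3$ and at the interior $w_i$ to force the star $\{T_i\}$, and extend convexly over $R$ so as to be strictly convex across every wall off the edge. Checking that these choices are mutually consistent — genuinely convex, inducing the required triangulation, and flat in precisely the $W$-directions — is the delicate step, and it is here that the two-dimensionality of $R$ and the collinearity of the edge lattice points (so that subdividing $[e_2,e_3]$ costs no convexity) are essential.
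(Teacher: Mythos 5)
Your setup is sound and matches the paper's (the unique lifts $w_i\in\Delta\cap N_3$ of the $v_i$, the unimodularity of the triangles $T_i=\Conv(e_3,w_{i-1},w_i)$, and the criterion that $Y\subset U_\Sigma$ exactly when the vertices joined to $e_3$ are $w_0,\dots,w_s$), but both load-bearing steps of your argument have genuine gaps, and one is stated in a form that cannot hold. First, projectivity: a unimodular triangulation of the residual polygon $R$ always exists in dimension two, but it need not be \emph{regular} (non-regular triangulations already occur for planar point configurations), so your $\Sigma$ only yields a crepant resolution that is a priori non-projective over $\bC^3/G$, whereas the Proposition asserts a projective one. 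Your proposed fix --- pushing the heights at $e_3$ and at the $w_i$ down and extending convexly --- is not justified: the link $w_0,\dots,w_s$ is an essentially arbitrary lattice chain in $\Delta$ (it need not be convex as seen from $e_3$), and producing a regular triangulation with a \emph{prescribed} star at $e_3$ is precisely the nontrivial content; you flag it as ``the delicate step'' but do not prove it.

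Second, the specification of $\phi_0$ is internally inconsistent. Suppose $\phi_0$ is linear on the cone over $[e_2,e_3]$ and let $u$ be an interior lattice point of that edge. Subtracting a global linear function, assume $\phi_0$ vanishes on the plane spanned by $e_2,e_3$. Every linear piece on a maximal cone containing the ray $u$ then vanishes on $\bR u$, so $\phi_0$ descends on the star of $u$ to a convex piecewise-linear function $h$ on a half-plane in $(N_3/\bZ u)\otimes\bR$ vanishing on both (opposite) boundary rays; writing an interior vector as $v=\frac{1}{2}\bigl((v+tb)+(v-tb)\bigr)$ with $b$ a boundary direction and $t\gg 0$, convexity forces $h$ to be linear on the whole half-plane. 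Hence flatness of $\phi_0$ along the edge \emph{forces} $\phi_0$ to be flat across every interior wall in the star of each interior edge point $u_j$ --- not only across ``the walls dual to $C_1,\dots,C_{m_1-1}$'' --- contradicting your requirement that $\phi_0$ be strictly convex across every remaining wall. Consequently $\psi=\widehat\psi+t\phi_0$ is not shown to be positive on the complete invariant curves dual to those transverse walls: $\phi_0$ contributes nothing there, and an arbitrary lift $\widehat\psi$ of $\psi_W$ (chosen only via surjectivity of $\Pic(U_\Sigma)\to\Pic(W)$) does not control them; whether a lift exists that is positive on them is essentially the Proposition itself. (There is also a small index slip: since $Z$ stabilizes $(1,0)$, the transverse singularity is of type $A_{m_2-1}$, with curves indexed by interior points of $[e_2,e_3]$.) The paper avoids all piecewise-linear constructions by inducting on $|G|$: it star-subdivides $\Delta$ at the lattice point $w$ with smallest $e_3$-coefficient into $we_2e_3$, $we_1e_3$, $we_1e_2$, glues triangulations supplied by the induction hypothesis, and uses that a star subdivision makes $U_\Sigma\to U_{\Sigma_0}$ projective with $\Amp(U_\Sigma)\to\Amp(U_{\Sigma_1})$ surjective, the base case being $U_\Sigma\cong W\times\bC$; any repair of your direct approach would in effect have to prove a regularity statement of exactly this inductive kind.
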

\begin{proof}
Since $Y$ is dominated by $\Ymax$, it is defined by
primitive vectors $v_0, v_1, \dots, v_s \in \Delta' \cap N_2$.
Let $w_i \in \Delta \cap N_3$ be the unique lift of $v_i$.
For  a basic triangulation $\Sigma$ of $\Delta$,
$U=U_\Sigma$ contains $Y$ if and only if the points connected to $e_3$ in $\Sigma$
are exactly $w_0, \dots, w_s$.

We prove the assertion by the induction on the order $|G|$ of $G$.
If $|G|=1$, then there is nothing to prove.
We consider the number
$$\nu:=\#(\{w_0, \dots, w_{s-1}\}\setminus \{e_1\})\ge 0.$$

If $\nu=0$, then $s$ must be $1$ and $w_0=e_1$ is a primitive vector.
Especially, $\{e_1, v_1\}$ is a basis of $N_2$.
In this case,  $\Delta$ has a unique basic triangulation $\Sigma$ and
$U_\Sigma \cong W \times \bC$.
Hence the restriction map
$\Amp(U_\Sigma) \to \Amp(W)$ is an isomorphism.

Suppose $\nu >0$.
Let $w \in \{w_0, \dots, w_{s-1}\}\setminus \{e_1\}$
be a point such that
the coefficient of $e_3$ in $w$ is the smallest.
Then $w$ determines a {\it star subdivision} of $\Delta$:
$\Delta=\bigcup_{i=1}^{3} \Delta_i$
where $\Delta_1$, $\Delta_2$, $\Delta_3$
are the triangles $we_2e_3$, $we_1e_3$, $we_1e_2$ respectively.
Note that either $\Delta_2$ or $\Delta_3$ may be degenerate,
in which case we simply ignore the degenerate one in the sequel.
This subdivision of $\Delta$, which is denoted by $\Sigma_0$, determines a projective crepant birational morphism
$U_{\Sigma_0} \to \bC^3/G$ where $U_{\Sigma_0}$ is a toric variety with at most Gorenstein quotient singularities.
The choice of $w$ implies that $w_0, \dots, w_s$ are in $\Delta_1 \cup \Delta_2$.
Hence by the induction hypothesis, there are basic triangulations $\Sigma_1$ and $\Sigma_2$ of $\Delta_1$ and $\Delta_2$ respectively,
which satisfy the following conditions:
in $\Sigma_1 \cup \Sigma_2$,
the vertices connected to $e_3$ are exactly $w_0, \dots, w_s$,
the map $\Amp(U_{\Sigma_1}) \to \Amp(W)$ is surjective
and $\Amp(U_{\Sigma_2})$ is non-empty.
We choose an arbitrary basic triangulation $\Sigma_3$ of $\Delta_3$ with non-empty
$\Amp(U_{\Sigma_3})$.
Combining the triangulations $\Sigma_1$, $\Sigma_2$ and $\Sigma_3$ together,
we obtain a basic triangulation of $\Delta$ such that $U_\Sigma \supset Y$.
Since $\Delta=\bigcup_{i=1}^{3} \Delta_i$ is a star subdivision,
we see that $U_\Sigma \to U_{\Sigma_0}$
is a projective morphism and
the map $\Amp(U_\Sigma) \to \Amp(U_{\Sigma_1})$ is surjective.
Therefore, the morphism $U_\Sigma \to \bC^3/G$ is also projective
and $\Amp(U_\Sigma) \to \Amp(W)$ is surjective.
\end{proof}

\section{$G$-constellations on a variety}\label{sec:variety}
In the case of $G$-constellations for non-abelian $G\subset \GL(2, \bC)$,
we shall use the iterated construction of moduli spaces for a normal subgroup of $G$
as in \cite{Ishii-Ito-Nolla}.
In order to do so, we have to consider $G$-constellations on a variety,
rather than an affine space.
Especially, the space of stability parameters will be larger than the affine case in general.

Suppose $U$ is a quasi projective variety of finite type over $\bC$
and $G$ is a finite group acting on $U$.
Let $\coh^G(U)$ be the abelian category of $G$-equivariant coherent sheaves on $U$
and $\coh^G_\cpt(U)$ its subcategory consisting of sheaves whose supports are proper over $\bC$.
The corresponding Grothendieck groups are denoted by $K(\coh^G(U))$ and $K(\coh^G_\cpt(U))$ respectively.
We also consider the perfect derived category $\Perf^G(U)$ of $G$-equivariant perfect complexes
and its Grothendieck group $K(\Perf^G(U))$.
For $\alpha \in K(\Perf^G(U))$ and $\beta \in K(\coh^G_\cpt(U))$, we write
\begin{equation}\label{eq:pairing}
\chi(\alpha, \beta):=\sum_i(-1)^i \dim \Ext^i_{\scO_U}(\alpha, \beta)^G.
\end{equation}

Let $\coh^G_{\zdim}(U)$ be the subcategory of $\coh^G_\cpt(U)$ consisting of sheaves with $0$-dimensional support.
We define the stability condition of objects in $\coh^G_{\zdim}(U)$.
\begin{definition}\label{def:stability}
Fix a class $\xi \in K(\Perf^G(U))$.
 An object $E\in \coh^G_{\zdim}(U)$ is said to be {\it $\xi$-stable} (or {\it $\xi$-semistable}) if $\chi(\xi, E)=0$ and if for every non-trivial $G$-equivariant subsheaf $F$ of $E$, $\chi(\xi, [F])>0$ (or $\chi(\xi, [F])\ge 0$).
\end{definition}

In the case where $U=\bC^N$ is an affine space with a linear $G$-action,
$K(\Perf^G(U))=K(\coh^G(U))$ is isomorphic to (the dual of) the representation ring $R(G)$
and the definition coincides with the ($\bZ$-valued) one in \S \ref{subsec:def}.

We have a well-defined function $\rank: K(\Perf^G(U)) \to \bZ$
which extends the rank of a locally free sheaf.
Put
$$
K(\Perf^G(U))^0:= \{ \xi \in K(\Perf^G(U)) \mid \rank \xi=0\}.
$$

\begin{definition}\label{def:G-constellation}
A {\it $G$-constellation} on $U$ is a $G$-equivariant coherent sheaf $E$ on $U$ with finite support
such that $H^0(E)$ is isomorphic to the regular representation of $G$
as a representation of $G$
and $\chi(\xi, E)=0$ for any $\xi \in K(\Perf^G(U))^0$.
 \end{definition}

For any $\xi \in K(\Perf^G(U))^0$, we can discuss the $\xi$-(semi)stabilities of $G$-constellations on $U$ according to Definition \ref{def:stability}.
Since the multiplication by a positive integer does not change the stability condition,
we may replace $K(\Perf^G(U))^0$ by $K(\Perf^G(U))^0_\bQ$.

\begin{remark}
In general, there may exist an object
$E$ supported on several fixed points such that $H^0(E) \cong R(G)$ but $\chi(\xi, E) \ne 0$
for some $\xi \in K(\Perf^G(U))^0$.
Definition \ref{def:G-constellation} excludes such cases.
\end{remark}

\begin{remark}
If $U$ is smooth, then $K(\Perf^G(U))$ coincides with $K(\coh^G(U))$ and
we write $K(\coh^G(U))^0$ instead of $K(\Perf^G(U))^0$.
\end{remark}

Now we define the moduli functors of $G$-constellations:
\begin{definition}
Fix a class $\xi \in K(\Perf^G(U))^0_\bQ$.
Then the moduli functor for the $\xi$-stable $G$-constellations on $U$ is defined to be the functor
\[
S \mapsto
\{ \text{flat families of $\xi$-stable $G$-constellations parameterized by $S$}\}/\sim
\]
for a locally noetherian scheme $S$ over $\bC$ where $E_S \sim F_S$ for flat families $E_S$ and $F_S$ means that there is a line bundle $L$ on $S$ such that
$E_S \cong F_S \otimes L$.
\end{definition}
\begin{remark}
We show the existence of the moduli scheme in a very special case in Theorem \ref{thm:iterated_construction}.
We do not discuss the existence problem in a general case in this paper.
\end{remark}

\section{Iterated construction of moduli spaces}\label{sec:iterated}
In this section, let $V$ denote either $\bC^2$ or $\bC^3$
and consider a finite subgroup $G \subset \GL(V)$ with a normal subgroup $N$ of $G$
such that $N \subset \SL(V)$.
Let
$$\theta^N :R(N) \to \bZ$$
be a generic stability parameter for $N$-constellations on $V$,
which is fixed by the conjugate action of $G$ on $R(N)$.
Put $Y_N=\scM_{\theta^N}(V)$ and $\Gbar=G/N$.
Since $N \subset \SL(V)$ and $\dim V \le 3$, there is an equivalence
\begin{equation}\label{eq:equivalence}
\Phi: D^b(\coh^{\Gbar}(Y_N)) \cong D^b(\coh^G(V))
\end{equation}
as in \cite[Theorem 4.1]{Ishii-Ueda_SMEC}
defined by
$$
\Phi(-)=\bR(p_{V})_*((p_{Y_N})^*(-) \otimes \scU)
$$
where $p_V, p_{Y_N}$ are the projections of $Y_N \times  V$ and $\scU$ is the universal family
of $N$-constellations.

\begin{lemma}\label{lem:Gbar-const}
Let $\scE$ be a $\Gbar$-equivariant coherent sheaf  on $Y_N$ with finite support.
Then $\scE$ is a $\Gbar$-constellation on $Y_N$ if and only if $\Phi(\scE)$ is a $G$-constellation on $V$.
In this case, $\Phi(\scE)$ is $\theta^N$-semistable.
\end{lemma}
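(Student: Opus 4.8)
The plan is to deduce the statement from three properties of the Fourier--Mukai functor $\Phi$ of \eqref{eq:equivalence}: it carries finite-support sheaves to finite-support sheaves exactly, it preserves the rank function, and (being an equivalence) it preserves the Euler form $\chi$ of \eqref{eq:pairing}. Granting these, the ``regular representation'' conditions on the two sides translate into one another by pairing against structure sheaves.

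First I would show that $\Phi$ restricts to an exact functor on sheaves with $0$-dimensional support. Since the universal family $\scU$ is flat over $Y_N$, the object $p_{Y_N}^*\scE\otimes\scU$ is a genuine sheaf, with support $\bigcup_i\{y_i\}\times\supp(\scU|_{y_i})$, which is $0$-dimensional and finite over $V$; hence $R^{>0}(p_V)_*$ vanishes and $\Phi(\scE)=(p_V)_*(p_{Y_N}^*\scE\otimes\scU)$ is a finite-support sheaf placed in degree $0$. As $p_{Y_N}^*$ and $\otimes\scU$ are exact and $(p_V)_*$ has no higher terms here, $\Phi$ is exact on this subcategory. The semistability assertion is then immediate: forgetting equivariance, $\scE$ is an iterated extension of skyscrapers $\scO_{y_i}$, so by exactness $\Phi(\scE)$ is, as an $N$-equivariant sheaf, an iterated extension of the fibres $\scU|_{y_i}$; each of these is a $\theta^N$-stable $N$-constellation of parameter value $0$, and such objects span an extension-closed abelian subcategory \cite{King}, so $\Phi(\scE)$ is $\theta^N$-semistable.

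For the equivalence of the constellation conditions I would compute multiplicities via $\chi$. For a finite-support sheaf, pairing against a structure sheaf reads off a representation multiplicity, $\chi([\scO_{Y_N}\otimes\rho],\scE)=\dim\Hom_{\Gbar}(\rho,H^0(\scE))$ and $\chi([\scO_V\otimes\sigma],\Phi\scE)=\dim\Hom_G(\sigma,H^0(\Phi\scE))$, because $V$ is affine and neither sheaf has higher cohomology. Since $K(\coh^G(V))=R(G)$ is spanned by the classes $[\scO_V\otimes\sigma]$, the condition $H^0(\Phi\scE)\cong\bC[G]$ already forces $\chi(\eta,\Phi\scE)=0$ for all $\eta\in K(\coh^G(V))^0$ (write $\eta=\sum a_\sigma[\scO_V\otimes\sigma]$ with $\sum a_\sigma\dim\sigma=0$); thus $\Phi\scE$ is a $G$-constellation exactly when $H^0(\Phi\scE)\cong\bC[G]$. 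The bridge is rank: evaluating $\Phi(\scF)$ on a free $G$-orbit, where a single point of $Y_N$ contributes the length-one orbit sheaf $\scO_{Nv}$, gives $\rank_V\Phi(\scF)=\rank_{Y_N}\scF$, so $\Phi$ identifies the rank-$0$ subgroups. Hence $\Phi^{-1}[\scO_V\otimes\sigma]=(\dim\sigma)\,[\scO_{Y_N}]+\beta_\sigma$ with $\beta_\sigma$ of rank $0$, and if $\scE$ is a $\Gbar$-constellation then $\chi(\beta_\sigma,\scE)=0$ while $\chi([\scO_{Y_N}],\scE)=1$ (the trivial representation occurs once in $\bC[\Gbar]$), giving $\dim\Hom_G(\sigma,H^0\Phi\scE)=\chi(\Phi^{-1}[\scO_V\otimes\sigma],\scE)=\dim\sigma$ for every $\sigma$, i.e.\ $H^0(\Phi\scE)\cong\bC[G]$. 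The converse runs symmetrically: pairing $[\scO_{Y_N}\otimes\rho]$ with $\scE$ through $\Phi$ recovers $H^0(\scE)\cong\bC[\Gbar]$ from $H^0(\Phi\scE)\cong\bC[G]$, and $\chi(\xi,\scE)=\chi(\Phi\xi,\Phi\scE)=0$ for $\xi\in K(\coh^{\Gbar}(Y_N))^0$ follows because $\Phi\xi$ is rank $0$ and $\Phi\scE$ is already a $G$-constellation.

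The main obstacle I expect is the rank computation, and more generally controlling $\Phi$ on Grothendieck groups. The whole argument hinges on the fibrewise length-one claim over free orbits and on the resulting preservation of rank-$0$ classes, since it is precisely this that lets the $\chi$-vanishing condition on $Y_N$ absorb the discrepancy between $[\scO_{Y_N}\otimes\rho]$ and $[\scO_V\otimes\sigma]$; I would verify it carefully using that $\scU$ is finite over $V$ and restricts to the reduced orbit sheaf over the free locus. Everything else is formal once \cite[Theorem 4.1]{Ishii-Ueda_SMEC} is invoked for the equivalence $\Phi$ and its compatibility with the equivariant structures.
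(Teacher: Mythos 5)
Your proposal is correct and takes essentially the same route as the paper's proof: $0$-dimensionality and exactness of $\Phi$ on finite-support sheaves, invariance of $\chi$ and of $\rank$ under the equivalence, and the filtration of $\Phi(\scE)$ in $\coh^N(V)$ by the $\theta^N$-stable fibres of $\scU$ for semistability, the only (harmless) variation being that you pin down the multiplicity of the regular representation via $\chi([\scO_{Y_N}],\scE)=1$, where the paper instead reads off $\dim H^0(\Phi(\scE))=|\Gbar|\cdot|N|=|G|$ from that same filtration. One minor slip in your closing remark: $\scU$ is not finite over $V$ --- its support contains $\{y\}\times\{0\}$ for every $y$ in the fibre of $Y_N\to V/N$ over the origin, hence a positive-dimensional locus over $0$ --- it is only proper over $V$ and one-to-one over the free locus, which is all that your free-orbit rank computation (and the exactness argument, which needs only finiteness of $\supp(p_{Y_N}^*\scE\otimes\scU)$ for $\scE$ of finite support) actually uses.
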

\begin{proof}
By the definition of $\Phi$, we can see that $\Phi(\scE)$ is a $0$-dimensional sheaf.
Since $\Phi$ is an equivalence, we have $\chi(\xi, \scE)=\chi(\Phi(\xi), \Phi(\scE))$.
Moreover, we can see $\rank \xi=\rank \Phi(\xi)$ for any $\xi \in K(\coh^{\Gbar}(Y_N))$.
Therefore, if $\scE$ is a $\Gbar$-constellation, $\chi(\xi, \Phi(\scE))=0$ for any
$\xi\in K(\coh^G(V))^0$.
This implies that $H^0(\Phi(\scE))$ is a multiple of the regular representation $\bC[G]$.
If we regard $\scE$ as an object of $\coh(Y_N)$, it is an Artinian sheaf of length $|\Gbar|$
and therefore $\Phi(\scE)$ as an object of $\coh^N(V)$ has a filtration of length $|\Gbar|$
whose factors are $\theta^N$-stable $N$-constellations.
Therefore, $\Phi(\scE)$ is $\theta^N$-semistable and $H^0(\Phi(\scE))$ as a representation of $N$ is the direct sum of $|\Gbar|$ copies of the regular representation of $N$.
This implies that $H^0(\Phi(\scE)) \cong \bC[G]$ and therefore $\Phi(\scE)$ is a $G$-constellation.
The converse is proved in the same way.
\end{proof}

The following lemma follows from the arguments in \cite[\S 8]{Bridgeland-King-Reid}:
\begin{lemma}\label{lem:BKR}
Let $E$ be an $N$-equivariant coherent sheaf on $V$ with finite support such that
$H^0(E)$ is isomorphic to $\bC[N]^{\oplus s}$ for some integer $s>0$ as a $\bC[N]$-module.
If $E$ is $\theta^N$-stable,
then we have $s=1$, i.e., $E$ is an $N$-constellation.
\end{lemma}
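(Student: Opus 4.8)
The plan is to use $\theta^N$-stability to reduce $\Supp E$ to a single $N$-orbit, then to reduce along the stabilizer of a point of that orbit to a genuinely local problem at a fixed point, where the multiplicity-one statement is supplied by the stability analysis of \cite[\S 8]{Bridgeland-King-Reid}. First I would show that $E$ is indecomposable. If $E \cong E_1 \oplus E_2$ with both summands nonzero, then $E_1$ and $E_2$ are proper $N$-equivariant subsheaves and $\theta^N(H^0(E_1)) + \theta^N(H^0(E_2)) = \theta^N(H^0(E)) = s\,\theta^N(\bC[N]) = 0$, contradicting that stability forces both values to be strictly positive. Since a sheaf with finite support splits as a direct sum along the finitely many $N$-orbits meeting its support, indecomposability forces $\Supp E$ to be a single orbit $O = N\cdot p$.

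Next I would pass to the stabilizer $N_p \subseteq N$ of $p$. Induction of equivariant sheaves identifies the category of $N$-equivariant sheaves supported on $O$ with the category of $N_p$-equivariant $\scO_{V,p}$-modules, sending $E$ to its $N_p$-equivariant summand $E_p$ supported at $p$, with $E \cong \operatorname{Ind}_{N_p}^N E_p$, and identifying $N$-equivariant subsheaves with $N_p$-equivariant submodules. Under this identification $\theta^N(H^0(F))$ becomes $\vartheta(H^0(F_p))$, where $\vartheta(W) := \theta^N(\operatorname{Ind}_{N_p}^N W)$ defines a stability parameter on $R(N_p)$ that still annihilates $\bC[N_p]$, since $\operatorname{Ind}_{N_p}^N \bC[N_p] \cong \bC[N]$. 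Hence $E_p$ is $\vartheta$-stable, and we are reduced to a point $p$ fixed by the acting group $N_p$. When $N_p$ is trivial the parameter $\vartheta$ is identically zero, so any proper nonzero submodule of $E_p$ would have $\vartheta$-value $0$, violating strict stability; thus $E_p$ is a simple $\scO_{V,p}$-module, i.e. the residue field, and $s = 1$.

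The essential case, and the one I expect to be the main obstacle, is a point $p$ fixed by a nontrivial group $N_p \subset \SL(V)$ (for $V = \bC^2$ this is the origin with $N_p = N$, and for $V = \bC^3$ the origin or a point of a line fixed by $N_p$). Here $\vartheta$-stability is a genuine constraint: there are equivariant submodules on which $\vartheta$ need not vanish, so one cannot conclude simplicity formally. I would instead invoke the local argument of \cite[\S 8]{Bridgeland-King-Reid}, which applies whenever the stabilizer lies in $\SL(V)$ with $\dim V \le 3$ and the parameter is generic, to conclude that a $\vartheta$-stable equivariant sheaf concentrated at a fixed point, whose space of sections is the corresponding multiple of the regular representation, must have multiplicity one. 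Verifying that $E_p$ meets the hypotheses of that argument, in particular that its space of global sections is the appropriate multiple of $\bC[N_p]$ and that $\vartheta$ is generic, is the delicate point; granting it, the three cases together give $s = 1$, so $E$ is an $N$-constellation.
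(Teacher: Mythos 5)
Your reductions are correct as far as they go: stability does force the support to be a single $N$-orbit, the induction (Mackey) equivalence between $N$-equivariant sheaves on the orbit and $N_p$-equivariant sheaves at $p$ does match up equivariant subsheaves and converts $\theta^N$ into $\vartheta=\theta^N\circ\operatorname{Ind}_{N_p}^N$, and the free-orbit case works exactly as you say. One of your two flagged worries is in fact harmless: $\operatorname{Ind}_{N_p}^N$ takes proper nonzero subrepresentations of $\bC[N_p]$ to proper nonzero subrepresentations of $\bC[N]$, so genericity of $\theta^N$ passes to $\vartheta$ automatically. For comparison, the paper offers no more detail than you do at the crux: its entire proof is the remark that the lemma follows from the arguments in \cite[\S 8]{Bridgeland-King-Reid}. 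In the hardest case --- for $V=\bC^2$ the only non-free orbit is $\{0\}$, where $N_p=N$ and your reduction is vacuous --- your proposal therefore coincides with the paper's proof in delegating everything to Bridgeland--King--Reid.

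The genuine gap is your other flagged point, in the case $V=\bC^3$ with $1\neq N_p\subsetneq N$: the hypothesis of the local statement you invoke --- that $H^0(E_p)$ is a multiple of $\bC[N_p]$ --- is not implied by $H^0(E)\cong\bC[N]^{\oplus s}$, and it cannot be ``verified'' because it can simply fail. The induction map $R(N_p)\to R(N)$ is not injective, and induction does not reflect multiples of the regular representation. Concretely, $S_3$ embeds in $\SL(3,\bC)$ as (standard representation)$\,\oplus\,$(sign), the stabilizer of a point on the fixed axis is $C_3$, and with $\omega,\omega^2$ the nontrivial characters of $C_3$ one has $\operatorname{Ind}_{C_3}^{S_3}\bigl(\mathbf{1}\oplus\omega^{\oplus 2}\bigr)\cong\bC[S_3]$; more generally $\operatorname{Ind}_{C_3}^{S_3}W\cong\bC[S_3]^{\oplus s}$ for every $W=\mathbf{1}^{\oplus s}\oplus\omega^{\oplus b}\oplus(\omega^2)^{\oplus(2s-b)}$ with $0\le b\le 2s$. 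So after your stabilizer reduction the section module of $E_p$ need not be a multiple of $\bC[N_p]$, and the multiplicity-one statement you cite from \cite[\S 8]{Bridgeland-King-Reid} does not apply to it. To close the gap you would have to either prove the fixed-point statement under the weaker hypothesis $\operatorname{Ind}_{N_p}^N H^0(E_p)\cong\bC[N]^{\oplus s}$, or drop the stabilizer reduction in the non-free case and run the stability argument for $E$ on the whole orbit --- which is in effect what the paper's wholesale citation does.
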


We compose $\theta^N$ with the restriction map $R(G) \to R(N)$ and regard it as
a stability parameter for $G$-constellations as in \cite[\S 2.2]{Ishii-Ito-Nolla}.

\begin{lemma}\label{lem:HN}
Let $E$ be a $G$-equivariant coherent sheaf on $V$ with finite support such that
$H^0(E) \cong \bZ[G]^{\oplus s}$ for some $s$.
If $E$ is $\theta^N$-semistable in $\coh^G(V)$, then it is also $\theta^N$-semistable in $\coh^N(V)$.
\end{lemma}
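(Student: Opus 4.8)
The plan is to exploit that $\theta^N$-semistability in $\coh^N(V)$ tests \emph{all} $N$-equivariant subsheaves, whereas $\theta^N$-semistability in $\coh^G(V)$ tests only the $G$-equivariant ones; so the content of the lemma is that, for a $G$-invariant parameter, any $N$-destabilizer can be replaced by a $G$-equivariant one. For an $N$-equivariant subsheaf $F\subseteq E$ I would write $\vartheta(F):=\theta^N([H^0(F)])$, where $H^0(F)$ is regarded as an $N$-representation; for a $G$-equivariant $F$ this agrees with the value used in $\coh^G(V)$-stability, obtained by restricting $[H^0(F)]$ from $R(G)$ to $R(N)$ and applying $\theta^N$. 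Since all sheaves in sight have $0$-dimensional support, $H^0$ is exact, so $\vartheta$ is additive on short exact sequences. From $H^0(E)\cong\bC[G]^{\oplus s}$, the isomorphism $\bC[G]\cong\bC[N]^{\oplus[G:N]}$ of $N$-representations, and $\theta^N(\bC[N])=0$, one gets $\vartheta(E)=0$; thus $E$ fails to be $\theta^N$-semistable in $\coh^N(V)$ precisely when some $N$-equivariant $F$ has $\vartheta(F)<0$.

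Next I would construct a canonical maximal destabilizer among the $N$-equivariant subsheaves. These are closed under sum and intersection, and from the exact sequence $0\to F_1\cap F_2\to F_1\oplus F_2\to F_1+F_2\to 0$ the function $\vartheta$ is modular, $\vartheta(F_1+F_2)+\vartheta(F_1\cap F_2)=\vartheta(F_1)+\vartheta(F_2)$. As $H^0(F)$ is an $N$-subrepresentation of the finite-dimensional $H^0(E)$, its class in $R(N)$ ranges over a finite set, so $\mu:=\min_F\vartheta(F)$ is attained. If $\vartheta(F_1)=\vartheta(F_2)=\mu$, then modularity together with $\vartheta\ge\mu$ forces $\vartheta(F_1+F_2)=\mu$; since $E$ has finite length, the sum of all subsheaves attaining $\mu$ is a finite sum, again attains $\mu$, and is the unique maximal such subsheaf, which I call $F_{\max}$.

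The key point is that $F_{\max}$ is automatically $G$-stable. For $g\in G$ the automorphism $g$ of $E$ carries $N$-equivariant subsheaves to $N$-equivariant subsheaves, because $N$ is normal; and the class of $H^0(g\cdot F)$ in $R(N)$ is the image of $[H^0(F)]$ under the automorphism of $R(N)$ given by conjugation by $g$, so $\vartheta(g\cdot F)=\vartheta(F)$ since $\theta^N$ is fixed by the conjugate $G$-action on $R(N)$. Thus $F\mapsto g\cdot F$ permutes the subsheaves attaining $\mu$, and by uniqueness $g\cdot F_{\max}=F_{\max}$, so $F_{\max}$ is $G$-equivariant. Finally, if $E$ were not $\theta^N$-semistable in $\coh^N(V)$ we would have $\mu<0$, whence $F_{\max}\ne 0$ (as $\vartheta(0)=0$) and $F_{\max}\ne E$ (as $\vartheta(E)=0$); then $0\subsetneq F_{\max}\subsetneq E$ is a proper $G$-equivariant subsheaf with $\vartheta(F_{\max})=\mu<0$, contradicting $\theta^N$-semistability in $\coh^G(V)$. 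This proves the lemma.

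The step I expect to require the most care is the $G$-invariance of $F_{\max}$: one must check precisely that $g\cdot F$ is again $N$-equivariant, using normality of $N$, and that its associated $N$-representation is the $g$-conjugate of that of $F$, so that the assumed invariance of $\theta^N$ under the $G$-action on $R(N)$ actually applies. Everything else is the standard modularity argument producing the maximal destabilizing subobject in a King-type stability, and the bookkeeping identity $\vartheta(E)=0$.
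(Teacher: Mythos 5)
Your proof is correct, but it takes a genuinely different and more elementary route than the paper's. The paper promotes $\theta^N$ to a $G$-invariant Bridgeland stability condition $Z(E)=\theta^N(H^0(E))+\sqrt{-1}\,\eta(H^0(E))$ on the category $\coh^N(V)_0$, where $\eta$ is a conjugation-invariant functional that is strictly positive on irreducibles (e.g.\ $\rho\mapsto\dim\rho$), and then invokes uniqueness of the Harder--Narasimhan filtration: if $E$ were unstable in $\coh^N(V)$, the first HN piece would be carried to itself by every $g\in G$, since $g$ maps HN filtrations to HN filtrations and $Z$ is $G$-invariant, yielding a $G$-equivariant subsheaf with $\theta^N<0$. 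You instead construct the canonical destabilizer by hand: modularity of $\vartheta$ on the lattice of $N$-equivariant subsheaves, finiteness of the possible classes $[H^0(F)]\in R(N)$, and finite length of $E$ produce a unique maximal subsheaf $F_{\max}$ attaining $\mu=\min\vartheta$, and the same uniqueness-plus-invariance mechanism --- normality of $N$ and the $G$-invariance of $\theta^N$, exactly the two hypotheses you flag at the end --- forces $g\cdot F_{\max}=F_{\max}$. So both arguments share the skeleton ``canonical destabilizer $+$ $G$-invariant stability data $\Rightarrow$ $G$-equivariant destabilizer,'' but yours dispenses with the auxiliary parameter $\eta$ and all HN/Bridgeland machinery at the cost of redoing, via modularity, the existence-and-uniqueness argument that HN theory packages; the paper's version is shorter given the cited background, while yours is self-contained and makes explicit precisely where normality of $N$ and the conjugation-invariance of $\theta^N$ enter. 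Your bookkeeping is also right: $H^0$ is exact here, $\vartheta(E)=0$ follows from $\bC[G]\cong\bC[N]^{\oplus[G:N]}$ as $N$-representations together with $\theta^N(\bC[N])=0$, and the $\bZ[G]$ in the statement is a typo for $\bC[G]$, which you silently and correctly repaired.
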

\begin{proof}
Let $\eta: R(N) \to \bZ$ be a group homomorphism such that $\eta(\rho)>0$ for any irreducible representation $\rho$ of $N$.
We further suppose $\eta$ is invariant under the conjugate action of $G$.
Then,
$$
Z(E):=\theta^N(H^0(E))+\sqrt{-1}\eta(H^0(E))
$$
defines a $G$-invariant Bridgeland stability condition \cite[Example 5.5]{Bridgeland_SCTC} (see also \cite[Lemma 7.1.3]{Bayer_Craw_Zhang}) on $\coh^N(V)_0$,
the category of $N$-equivariant coherent sheaves on $V$ with $0$-dimensional support.
As in \cite[Lemma 7.1.5]{Bayer_Craw_Zhang},
the equality $\theta^N(H^0(E))=0$ implies that $E$ is $\theta^N$-semistable if and only if it is semistable with respect to $Z$.
Assume $E$ is not $\theta^N$-semistable and let $F\subset E$ be the first step of
the Harder-Narasimhan filtration of $E$ in $\coh^N(E)$ with respect to $Z$.
Then the uniqueness of the HN filtration and the $G$-invariance of $Z$ imply that
$F$ is invariant under the $G$-action.
This means that $F$ is a subsheaf of $E$ in $\coh^G(V)$,
which contradicts the $\theta^N$-semistability of $E$ in $\coh^G(V)$.
\end{proof}

\begin{proposition}\label{prop:bijection}
The functor $\Phi$ induces a bijection
from the set of $\Gbar$-constellations on $Y_N$
to the set of $\theta^N$-semistable $G$-constellations on $V$.
\end{proposition}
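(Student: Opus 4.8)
The map is well defined and injective almost for free. If $\scE$ is a $\Gbar$-constellation then Lemma~\ref{lem:Gbar-const} says $\Phi(\scE)$ is a $G$-constellation and is $\theta^N$-semistable, so $\Phi$ does carry $\Gbar$-constellations into $\theta^N$-semistable $G$-constellations; injectivity on isomorphism classes is immediate from $\Phi$ being the equivalence in \eqref{eq:equivalence}. Thus the whole content is surjectivity: given a $\theta^N$-semistable $G$-constellation $E$ on $V$, I must produce a $\Gbar$-constellation $\scE$ with $\Phi(\scE)\cong E$. The natural candidate is $\scE:=\Phi^{-1}(E)$, and by Lemma~\ref{lem:Gbar-const} it suffices to show that this a priori complex is in fact a $\Gbar$-equivariant coherent sheaf with finite support, since then $\Phi(\scE)=E$ being a $G$-constellation forces $\scE$ to be a $\Gbar$-constellation.

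\textbf{Reduction to the ungraded McKay equivalence.} The functor $\Phi$ of \eqref{eq:equivalence} is built from the universal family $\scU$ of $N$-constellations, and the same kernel defines the Bridgeland--King--Reid equivalence $\Psi\colon D^b(\coh Y_N)\to D^b(\coh^N V)$ for the group $N$ alone, with $\Phi$ lying over $\Psi$ under forgetting the $\Gbar$- and $G$-structures down to $N$. Since the $\Gbar$-equivariant structure on $\Phi^{-1}(E)$ is inherited automatically, it is enough to prove that $\Psi^{-1}(E)$ is a $0$-dimensional coherent sheaf on $Y_N$. By Lemma~\ref{lem:HN}, $E$ is $\theta^N$-semistable in $\coh^N(V)$; I choose a Jordan--Hölder filtration of $E$ in the finite-length abelian category of slope-$0$ $\theta^N$-semistable objects, with $\theta^N$-stable factors $F_1,\dots,F_m$. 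The key point is that each $F_i$ is a $\theta^N$-stable $N$-constellation, hence of the form $\Psi(\scO_{y_i})$ for a point $y_i\in Y_N$ (recall $\Psi$ sends the skyscraper $\scO_y$ to the stable $N$-constellation parametrised by $y$). Granting this, $E$ is an iterated extension of the sheaves $\Psi(\scO_{y_i})$; applying the exact functor $\Psi^{-1}$ turns these short exact sequences into short exact sequences whose outer terms are the skyscrapers $\scO_{y_i}$, so $\Psi^{-1}(E)$ is an iterated extension of skyscrapers, i.e.\ a $0$-dimensional coherent sheaf. This supplies the required $\scE$ and completes surjectivity.

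\textbf{The main obstacle: identifying the stable factors.} The heart of the argument is the claim that a slope-$0$ $\theta^N$-stable object $F\in\coh^N_{\zdim}(V)$ is an $N$-constellation, and I would establish it in two stages. First, genericity of $\theta^N$ forces the class $[H^0(F)]\in R(N)$ to be a positive multiple of the regular representation: writing dimension vectors in the McKay quiver of $N$, the class of a $\theta^N$-stable object is a positive root, and for generic $\theta^N$ the only positive roots $d$ with $\theta^N(d)=0$ are the multiples of $[\bC[N]]$ (for $V=\bC^2$, $N\subset\SL(2)$ this is the affine ADE computation: every real root is congruent modulo $\bZ\cdot[\bC[N]]$ to a finite-type root, on which $\theta^N$ is nonzero by genericity). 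Hence $H^0(F)\cong\bC[N]^{\oplus k}$ for some $k\ge 1$. Second, Lemma~\ref{lem:BKR} applies to this $F$ and gives $k=1$, so $F$ is an $N$-constellation. I expect the genericity/root-theoretic step to be the only substantive difficulty; the descent from $N$- to $\Gbar$-equivariant data and the extension-closure bookkeeping are routine once the stable factors have been identified.
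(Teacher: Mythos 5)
Your proof has exactly the skeleton of the paper's: the forward direction and injectivity via Lemma~\ref{lem:Gbar-const} and the equivalence \eqref{eq:equivalence}; for surjectivity, the reduction to showing $\Phi^{-1}(E)$ is a sheaf with $0$-dimensional support, the passage to the non-equivariant statement (the paper likewise says ``we may regard $\Phi$ as an equivalence $D^b(\coh Y_N) \cong D^b(\coh^N(V))$''), Lemma~\ref{lem:HN} to get $\theta^N$-semistability in $\coh^N(V)$, a Jordan--H\"older filtration whose stable factors are $N$-constellations (hence transforms of skyscrapers), and the conclusion that $\Phi^{-1}(E)$ is an iterated extension of skyscrapers. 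Where you genuinely add something is at the key step: the paper writes only ``\dots has a filtration whose factors are $\theta^N$-stable $N$-constellations by Lemma~\ref{lem:BKR}'', but Lemma~\ref{lem:BKR} as stated presupposes $H^0(F) \cong \bC[N]^{\oplus s}$, which is not a priori true of a Jordan--H\"older factor; you correctly isolate this as the substantive point and supply the missing glue (the class of each stable factor is a multiple of $[\bC[N]]$) via the root-hyperplane description of genericity, a step the paper leaves implicit in its appeal to the arguments of \cite[\S 8]{Bridgeland-King-Reid}.

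Two caveats, one of which is a real gap relative to the statement as given. First, Section~\ref{sec:iterated} stipulates that $V$ is $\bC^2$ \emph{or} $\bC^3$, and the proposition covers both; your stage-one argument is explicitly the affine ADE computation and has no analogue for $N \subset \SL(3,\bC)$: there the McKay quiver is not an affine ADE quiver, classes of stable objects in $\coh^N_{\zdim}(V)$ are not governed by a root system, and genericity of $\theta^N$ in the paper's sense (no strictly semistable $N$-constellations) does not obviously exclude classes $d'$ with $\theta^N(d')=0$, $0 < d' < s\delta$, $d' \notin \bZ\delta$. So as written you prove the proposition only for $V=\bC^2$ --- sufficient for its use in Proposition~\ref{prop:-I} and Theorem~\ref{thm:small}, where $V=\bC^2$ and $N=\langle -I\rangle$, but not for the stated generality; for $\bC^3$ you must fall back on the boundedness/orthogonality arguments of \cite[\S 8]{Bridgeland-King-Reid} rather than root theory. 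Second, even for $\bC^2$, your assertion that the class of a $\theta^N$-stable object is a positive root, and that the generic locus is the complement of the root hyperplanes, are nontrivial inputs (Crawley--Boevey's results on modules over preprojective algebras and the Kronheimer-type hyperplane arrangement for Kleinian singularities); these should be cited, not presented as a computation. A one-line remark would also tidy the support bookkeeping: a finite-length equivariant sheaf supported on a free $N$-orbit automatically has class a multiple of $[\bC[N]]$, so the root-theoretic discussion is only needed for the factors supported at the origin.
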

\begin{proof}
If $\scE$ is a $\Gbar$-constellation on $Y_N$, then $\Phi(\scE)$ is a $\theta^N$-semistable $G$-constellation by Lemma \ref{lem:Gbar-const}.
Conversely, suppose $E$ is a $\theta^N$-semistable $G$-constellation on $V$.
By Lemma \ref{lem:Gbar-const}, it suffices to show that $\Phi^{-1}(E)$ lies in $\coh^{\Gbar}(Y_N)$ and has a $0$-dimensional support.
For this purpose, we may regard $\Phi$ as an equivalence $D^b(\coh Y_N) \cong D^b(\coh^N(V))$.
By lemma \ref{lem:HN}, $E$ is $\theta^N$-semistable as a sheaf in $\coh^N(V)$
and therefore has a filtration whose factors are $\theta^N$-stable $N$-constellations by Lemma \ref{lem:BKR}.
Then, $\Phi^{-1}(E)$ as an object in $D^b(\coh(Y_N))$ is a sheaf with a filtration
whose factors are skyscraper sheaves.
This is what we needed.
\end{proof}

Let
$$
\varphi: K(\coh^{\Gbar}(Y_N))^0_\bQ \overset{\sim}{\rightarrow}
K(\coh^G(V))^0_\bQ \cong \Theta
$$
be the isomorphism induced by $\Phi$.
The following theorem  generalizes \cite[Theorem 2.6]{Ishii-Ito-Nolla}.
\begin{theorem}\label{thm:iterated_construction}
Let $\theta^N: R(N) \to \bZ$ be a generic stability condition for $N$-constellations
fixed by the conjugate action of $G$
and $\xi \in K(\coh^{\Gbar}(Y_N))^0$ be a stability parameter
for $\Gbar$-constellations on $Y_N$.
\begin{enumerate}
\item[(1)] There exists a scheme $\cM_\xi(Y_N)$ representing the moduli functor for $\xi$-stable $\Gbar$-constellations on $Y_N$.
\item[(2)]
If we put
$$
\theta:=m\theta^N + \varphi(\xi)
$$
for $m \gg 0$, then $\scM_\theta(V)$ is isomorphic to the moduli space
$\scM_\xi(Y_N)$ of $\xi$-stable $\Gbar$-constellations on $Y_N$.
\end{enumerate}
\end{theorem}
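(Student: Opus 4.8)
The plan is to show that for $m \gg 0$ a $G$-constellation $E$ on $V$ is $\theta$-stable precisely when $\Phi^{-1}(E)$ is a $\xi$-stable $\Gbar$-constellation on $Y_N$, and then to promote this bijection on closed points to an isomorphism of the two fine moduli schemes. The first ingredient is a uniform finiteness. Since $H^0$ is left exact, any $G$-equivariant subsheaf $F \subset E$ gives an injection $H^0(F) \hookrightarrow H^0(E) = \bC[G]$ of representations, so the class $[H^0(F)] \in R(G)$ ranges over the finite set $S$ of classes of subrepresentations of $\bC[G]$, independently of $E$. Writing $\theta(H^0(F)) = m\,\theta^N(H^0(F)) + \varphi(\xi)(H^0(F))$, I would fix $m_0$ so large that for all $m \ge m_0$ and all $s \in S$ the sign of $\theta(s)$ equals that of $\theta^N(s)$ whenever $\theta^N(s) \ne 0$, and equals that of $\varphi(\xi)(s)$ when $\theta^N(s) = 0$. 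This single $m_0$ then works simultaneously for every $E$ and every $F$.

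With this choice, $\theta$-(semi)stability splits into two layers. First, a $\theta$-semistable $G$-constellation is automatically $\theta^N$-semistable, since a subsheaf $F$ with $\theta^N(H^0(F)) < 0$ would force $\theta(H^0(F)) < 0$; by Proposition \ref{prop:bijection} (built on Lemmas \ref{lem:Gbar-const}, \ref{lem:HN} and \ref{lem:BKR}) such $E$ are exactly the images $\Phi(\scE)$ of $\Gbar$-constellations on $Y_N$. More precisely, the $N$-equivariant form of the equivalence \eqref{eq:equivalence} sends skyscraper sheaves on $Y_N$ to the $\theta^N$-stable $N$-constellations and thereby identifies $\coh^{\Gbar}_{0}(Y_N)$ with the abelian category $\scS$ of $\theta^N$-semistable objects of $\coh^G_{0}(V)$ of slope $0$. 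Second, among these $E$, stability is controlled by the residual term $\varphi(\xi)$. Here I would record the key observation that if $E \in \scS$ and $F \subset E$ is a $G$-subsheaf with $\theta^N(H^0(F)) = 0$, then $F$ itself lies in $\scS$: every $F' \subset F \subset E$ satisfies $\theta^N(H^0(F')) \ge 0$ by the semistability of $E$, which is exactly $\theta^N$-semistability of $F$. Thus the only relevant destabilizers are the subobjects of $E$ inside the abelian category $\scS$.

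It remains to match $\varphi(\xi)$-stability in $\scS$ with $\xi$-stability in $\coh^{\Gbar}_{0}(Y_N)$. Because $\Phi$ is an equivalence and $\varphi$ is induced by it, for any $\scF \subset \scE$ we have $\chi(\xi, \scF) = \chi(\varphi(\xi), \Phi(\scF)) = \varphi(\xi)(H^0(\Phi(\scF)))$, the last step being the identification of $K(\coh^G(V))^0$ with $\Theta$ acting through $H^0$. Since $\Phi$ carries the subobjects of $\scE$ bijectively onto the subobjects of $E = \Phi(\scE)$ in $\scS$, the requirement $\chi(\xi, \scF) > 0$ for all nontrivial $\scF$ is equivalent to $\varphi(\xi)(H^0(F)) > 0$ for all proper $0 \ne F \subsetneq E$ in $\scS$. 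Combining the two layers, for $m \ge m_0$ the object $E$ is $\theta$-stable if and only if $\scE = \Phi^{-1}(E)$ is a $\xi$-stable $\Gbar$-constellation, and the same argument with non-strict inequalities gives the semistable correspondence; in particular $\theta$ is generic exactly when $\xi$ is.

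Finally I would upgrade this bijection to an isomorphism of the fine moduli schemes supplied by King's GIT construction \cite{King}. Applying the relative Fourier--Mukai transform $\Phi$, with kernel the universal $N$-constellation $\scU$ on $Y_N \times V$, to the universal $\xi$-stable family on $\scM_\xi(Y_N) \times Y_N$ should yield a family of $\theta$-stable $G$-constellations over $\scM_\xi(Y_N)$, hence a morphism $\scM_\xi(Y_N) \to \scM_\theta(V)$; the inverse transform gives a morphism the other way, and the two are mutually inverse by the pointwise bijection and the universal properties of the fine moduli schemes. I expect the main obstacle to be exactly the flatness in families: one must verify that the relative transform of a flat family of $\xi$-stable $\Gbar$-constellations is again a flat family of genuine sheaves, with cohomology concentrated in degree $0$ and constant numerical invariants, so that it defines a morphism of moduli functors. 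This is precisely the technical point established in \cite[Theorem 2.6]{Ishii-Ito-Nolla} in the affine case, and I would adapt that argument to the present relative setting over $Y_N$.
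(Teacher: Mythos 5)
Your proposal is correct and follows essentially the same route as the paper: the same uniform choice of $m$ making $\theta$-stability decompose into $\theta^N$-semistability plus the residual condition $\varphi(\xi)>0$ on slope-zero subsheaves, the same appeal to Proposition \ref{prop:bijection}, and the same family-level upgrade via the relative Fourier--Mukai transform. The only cosmetic difference is that where the paper cites \cite[Lemma 2.6]{Ishii-Ito-Nolla} to match subsheaves $F \subset \Phi(\scE)$ with $\theta^N(F)=0$ to subsheaves $\scF \subset \scE$, you re-derive this by identifying $\coh^{\Gbar}_{0}(Y_N)$ under $\Phi$ with the abelian category of $\theta^N$-semistable slope-zero objects, which is the same underlying mechanism.
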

\begin{proof}
What we prove is that $\scM_\theta(V)$ in (2) represents the moduli functor in (1). 
We choose $m$ so that
$$
m > \sum_{\rho \in \Irr(G)} |(\varphi(\xi))(\rho)|\dim \rho.
$$
Then for any subsheaf $F$ of a $G$-constellation, we have $|(\varphi(\xi))(F)| < m$.

Let $\scE$ be a $\xi$-stable $\Gbar$-constellation on $Y_N$.
Then $\Phi(\scE)$ is a $\theta^N$-semistable $G$-constellation by Proposition \ref{prop:bijection}.
Therefore, a subsheaf $F$ of
$\Phi(\scE)$ satisfies $\theta^N(F) \ge 0$.
If $\theta^N(F) > 0$, then we have $\theta(F)>0$
by our choice of $m$.
If $\theta^N(F)= 0$, then there is a subsheaf $\scF$ of $\scE$ such that $F=\Phi(\scF)$ as in \cite[Lemma 2.6]{Ishii-Ito-Nolla}.
Then we obtain $\theta(F)= \chi(\xi, \scF)>0$ by the $\xi$-stability of $\scE$.
Thus $\Phi(\scE)$ is $\theta$-stable.

Conversely, suppose $E$ is a $\theta$-stable $G$-constellation on $V$.
Then it is $\theta^N$-semistable by our choice of $m$
and therefore $\scE:=\Phi^{-1}(E)$ is a $\Gbar$-constellation by Proposition \ref{prop:bijection}.
For a subsheaf $\scF \subset \scE$,
$F:=\Phi(\scF)$ has a filtration as an object of $\coh^N(V)$
whose factors are $N$-constellations.
Therefore $F$ satisfies $\theta^N(F)=0$
and hence we obtain $\chi(\xi, \scF)=\theta(F)>0$, which proves the $\xi$-stability of $\scF$.

Thus we have a bijection between $\xi$-stable $\Gbar$-constellations and $\theta$-stable $G$-constellations.
To establish an isomorphism $\scM_\theta(V) \cong \scM_\xi(Y_N)$,
we show that for any locally noetherian scheme $S$ over $\bC$,
this bijection can be extended to a bijection between flat families of $\xi$-stable $\Gbar$-constellations
and flat families of $\theta$-stable $G$-constellations parameterized by $S$.
Let $\cU$ be the universal $N$-constellation on $Y_N \times V$ and $\scU_S$ be the pull back of $\scU$ to $Y_N \times V \times S$.
Then we can define a functor
\[
\Phi_S: D^b(\coh^{\Gbar}Y_N \times S) \to D^b(\coh^{G} V \times S)
\]
by
\[
\Phi_S(-)=\bR(p_{V\times S})_*(\cU_S \otimes p_{Y_N\times S}^*(-))
\]
whose quasi-inverse is given by
\[
\Phi_S^{-1}(-)=((p_{Y_N\times S})_*(\cU_S^\vee[\dim V] \overset{\bL}{\otimes} p_{V\times S}^*(-))^N.
\]
Suppose $\cE_S$ is a flat family of $\xi$-stable
$\Gbar$-constellations on $Y_N$ parameterized by $S$.
Then, for any geometric point $s$ of $S$, we have $\Phi_S(\cE_S)\overset{\bL}{\otimes} \cO_s \cong \Phi(\cE_s)$ as in \cite[Lemma 4.1]{Bridgeland_ETCFMT}, which is a $\theta$-stable $G$-constellation on $V$.
Hence the argument in \cite[Proposition 4.2]{Bridgeland_ETCFMT} implies that $\Phi_S(\cE_S)$ is actually a flat family of $G$-constellations on $V$.
Conversely, if $E_S$ is a flat family of $\theta$-stable $G$-constellations,
the same argument shows that $\Phi_S^{-1}(E_S)$ is a flat family of $\xi$-stable
$N$-constellations on $Y_N$.
\end{proof}
\section{The case $G \ni -I$}
In this section, put $V=\bC^2$ and
assume that $G\subset\GL(V)$ contains $-I$, where $I$ is the identity matrix.
We put $N:=\langle -I \rangle \subset G$ and $\Gbar:=G/N$.
Let $\theta^N$ be any generic stability parameter for $N$-constellations
(which is automatically fixed by the conjugate action of $G$ since $N$ is central)
and let $Y_N=\scM_{\theta^N}(V)$ be the moduli space of $N$-constellations on $V$,
on which $\Gbar$ acts naturally.
Since $Y_N$ is a crepant resolution of the $A_1$ singularity $V/N$,
the maximal resolution of $(Y_N/\Gbar, B_N)$ coincides with the maximal resolution
of $(X, B)$, where $B_N$ is the boundary divisor on $Y_N$ determined by the ramification of
$Y_N \to Y_N/\Gbar$.

Let $C$ be the exceptional curve of $Y_N \to V/N$.
Then the equivalence \eqref{eq:equivalence} restricts to the equivalence
\begin{equation}\label{eq:cptspt}
\Phi:D^b(\coh^{\Gbar}_C(Y_N)) \cong D^b(\coh^G_{0}(V))
\end{equation}
of full subcategories consisting of objects supported by the subsets $C\subset Y_N$ and $\{0\}\subset V$ respectively.
Consider the Grothendieck groups of \eqref{eq:cptspt}:
\begin{equation}\label{eq:Kcptspt}
K(\coh^{\Gbar}_C(Y_N)) \cong K(\coh^G_{0}(V)),
\end{equation}
where $K(\coh^G_{0}(V))$ is isomorphic to the representation ring $R(G)$ of $G$.
Recall that there is a perfect pairing
$$
\chi: K(\coh^G(V)) \times K(\coh^G_{0}(V)) \to \bZ
$$
defined by \eqref{eq:pairing},
which is isomorphic to
$$
\chi: K(\coh^{\Gbar}(Y_N)) \times K(\coh^{\Gbar}_C(Y_N)) \to \bZ
$$
by $\Phi$.
Let
$$F_iK(\coh^{\Gbar}_C(Y_N))\subset K(\coh^{\Gbar}_C(Y_N))$$
be the subgroup generated by the classes of objects
whose supports are at most $i$-dimensional.
Then the classes of $\Gbar$-constellations on $Y_N$ lie in $F_0K(\coh^{\Gbar}_C(Y_N))$
and for a stability parameter
$$\xi \in K(\coh^{\Gbar}(Y_N))_\bQ \cong K(\coh^{\Gbar}_C(Y_N))_\bQ^*,$$
the actual stability condition depends only on its image in $F_0K(\coh^{\Gbar}_C(Y_N))^*_\bQ$.
In the next subsection, we investigate the structure of $F_0K(\coh^{\Gbar}_C(Y_N))$.

\subsection{Structure of $F_0K(\coh^{\Gbar}_C(Y_N))$}\label{subsec:key}
In this subsection,
we assume that $G$ is not abelian.
Notice that $G$ acts on the exceptional curve $C \cong \bP(V)$
through the homomorphism
$$G \hookrightarrow \GL(V) \twoheadrightarrow \PGL(V)$$
and let $Z \subset G$ be the kernel of $G \to \PGL(V)$.
It is the subgroup consisting of scalar matrices in $G$.

Since $G$ is non-abelian,
$G/Z \subset \PGL(V)$ is a polyhedral (or dihedral) group
acting on $\bP(V)$ which we regard as a (real) $2$-sphere.
There are three non-free $G/Z$-orbits in $C$:
the projections of the vertices, edges and faces of the regular polyhedron
to the sphere.
These orbits are denoted by $O_1$, $O_2$ and $O_3$ respectively.

For a $\Gbar$-orbit $O \subset C$,
let $\coh^{\Gbar}_O(Y_N)$ denote the category of $\Gbar$-equivariant coherent sheaves
supported on $O$.
Then we have an equivalence
\begin{equation}\label{eq:stabilizer_cat}
\coh^{\Gbar}_O(Y_N) \cong \coh^{\Gbar_P}_P(Y_N)
\end{equation}
where $\Gbar_P$ is the stabilizer subgroup of a point $P\in O$
and $\coh^{\Gbar_P}_P(Y_N)$ is the category of $\Gbar_P$-equivariant coherent sheaves
supported on $P$.
Taking the Grothendieck groups of the both sides, we obtain
\begin{equation}\label{eq:stabilizer}
K(\coh^{\Gbar}_O(Y_N)) \cong R(\Gbar_P)
\end{equation}
where $R(\Gbar_P)$ is the representation ring of $\Gbar_P$ regarded as an additive group.

Let $\Gbar_k\subset \Gbar$ be the stabilizer subgroup of a point in $O_k$,
which is an abelian group since $\Zbar:=Z/N \subset \Gbar_k$ is central and
$\Gbar_k/\Zbar$ is cyclic.
We consider the pushforward maps
\begin{equation} \label{eq:push_forward}
K(\coh^{\Gbar}_{O_k}(Y_N)) \to F_0K(\coh^{\Gbar}_C(Y_N))
\end{equation}
for $k=1,2,3$.
By \eqref{eq:stabilizer} for $O=O_k$, these maps are regarded as maps
$$
\beta_k: R(\Gbar_k) \to F_0K(\coh^{\Gbar}_C(Y_N)).
$$
Since $\Zbar$ is a subgroup of $\Gbar_k$, we have the induction maps
$$
\alpha_k: R(\Zbar) \to R(\Gbar_k).
$$
Define a map $\alpha : R(\Zbar)^{\oplus 2} \to R(\Gbar_1) \oplus R(\Gbar_2) \oplus R(\Gbar_3)$ by
$$
\alpha(a, b)=(\alpha_1(a), -\alpha_2(a)+\alpha_2(b), -\alpha_3(b)).
$$
The purpose of this subsection is to prove the following.
\begin{proposition}
Let $\Gbar_k$, $\beta_k$, $\alpha$ be as above.
Then the following is an exact sequence of additive groups:
$$
0 \to R(\Zbar)^{\oplus 2} \overset{\alpha}{\to} R(\Gbar_1) \oplus R(\Gbar_2) \oplus R(\Gbar_3)
\overset{\beta}{\to} F_0K(\coh^{\Gbar}_C(Y_N)) \to 0
$$
where $\beta=(\beta_1, \beta_2, \beta_3)$.
\end{proposition}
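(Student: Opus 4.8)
The plan is to identify $F_0K(\coh^{\Gbar}_C(Y_N))$ with the cokernel of a boundary map in $\Gbar$-equivariant $G$-theory and to compute that boundary explicitly. By Quillen's dévissage theorem, $K(\coh^{\Gbar}_C(Y_N))$ is canonically the equivariant $G$-theory $G_0^{\Gbar}(C)$ of the reduced curve $C\cong\bP^1$, and $F_0$ is the subgroup generated by $0$-dimensional sheaves. Write $C^\circ:=C\setminus(O_1\cup O_2\cup O_3)$ for the free locus and $F:=O_1\sqcup O_2\sqcup O_3$. The localization sequence for $F\hookrightarrow C\hookleftarrow C^\circ$ reads
\begin{equation*}
G_1^{\Gbar}(C^\circ)\xrightarrow{\ \partial\ } G_0^{\Gbar}(F)\xrightarrow{\ i_*\ } G_0^{\Gbar}(C)\xrightarrow{\ j^*\ } G_0^{\Gbar}(C^\circ)\to 0,
\end{equation*}
and under the identification \eqref{eq:stabilizer} the map $i_*$ is exactly $\beta=(\beta_1,\beta_2,\beta_3)$, with $G_0^{\Gbar}(F)=R(\Gbar_1)\oplus R(\Gbar_2)\oplus R(\Gbar_3)$. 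It therefore suffices to prove that $i_*$ is surjective onto $F_0$, that $\operatorname{im}\partial=\operatorname{im}\alpha$, and that $\alpha$ is injective.

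For the surjectivity, recall that $\Gbar$ acts on $C^\circ$ through $\Gbar/\Zbar$, which acts freely, while $\Zbar$ acts trivially; hence $G_*^{\Gbar}(C^\circ)\cong R(\Zbar)\otimes G_*(\Cbar^\circ)$ where $\Cbar^\circ:=C^\circ/\Gbar\cong\bP^1\setminus\{3\text{ points}\}$ is a smooth affine curve. On such a curve every skyscraper is trivial in $G$-theory: $[\scO_q]=[\scO(q)]-[\scO]$ vanishes in $G_0(\Cbar^\circ)=K_0(\Cbar^\circ)$ because $\Pic(\Cbar^\circ)=0$. Thus $j^*$ annihilates every $0$-dimensional class, so $F_0\subseteq\ker j^*=\operatorname{im}i_*$; as $\operatorname{im}i_*\subseteq F_0$ trivially, we get $\operatorname{im}\beta=F_0$ and $\ker\beta=\operatorname{im}\partial$.

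The heart of the argument is the computation of $\partial$. Writing $G_1^{\Gbar}(C^\circ)=R(\Zbar)\otimes K_1(\Cbar^\circ)$ and using that $\scO(\Cbar^\circ)^\times/\bC^\times$ is the group of divisor classes $\{(n_1,n_2,n_3)\in\bZ^3:\sum_k n_k=0\}$ at the three punctures, the boundary sends $\chi\otimes f$ to the sum of its local residues at the $O_k$. To evaluate the residue at $O_k$ I would invoke the slice theorem: near $P_k\in O_k$ the curve has a $\Gbar_k$-slice with coordinate $z$ on which $\Gbar_k/\Zbar\cong\bZ/m_k$ acts by $z\mapsto\zeta_{m_k}z$, the invariant coordinate downstairs being $w=z^{m_k}$. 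The residue of $w$ is the class of $\scO/(z^{m_k})=\bigoplus_{j=0}^{m_k-1}\bC\,z^{j}$, which as a $\Gbar_k$-representation is the regular representation of $\Gbar_k/\Zbar$, i.e. $\operatorname{Ind}_{\Zbar}^{\Gbar_k}(\mathbf{1})$. Twisting by $\chi$ and applying the projection formula $\operatorname{Ind}(\mathbf{1})\otimes\tilde\chi=\operatorname{Ind}(\tilde\chi|_{\Zbar})=\operatorname{Ind}(\chi)$ identifies this with $\alpha_k(\chi)$, so that $\partial(\chi\otimes f)=\sum_k\operatorname{ord}_{q_k}(f)\,\alpha_k(\chi)$ and the constants contribute nothing. (Geometrically this matches the relation $\beta\alpha=0$: degenerating a free orbit onto $O_k$ in a flat family produces a limit of class $\beta_k\alpha_k(\chi)$ independent of $k$.) Feeding in the two divisor generators $(1,-1,0)$ and $(0,1,-1)$ yields precisely the generators $(\alpha_1(\chi),-\alpha_2(\chi),0)$ and $(0,\alpha_2(\chi),-\alpha_3(\chi))$ of $\operatorname{im}\alpha$, so $\operatorname{im}\partial=\operatorname{im}\alpha$ and hence $\ker\beta=\operatorname{im}\alpha$.

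Finally, $\alpha$ is injective: each $\alpha_k=\operatorname{Ind}_{\Zbar}^{\Gbar_k}$ is injective because $\Gbar_k$ is abelian, so inductions of distinct characters have disjoint supports; and from $\alpha(a,b)=(\alpha_1(a),-\alpha_2(a)+\alpha_2(b),-\alpha_3(b))$ the vanishing of the outer coordinates forces $a=b=0$. This completes the exact sequence. I expect the principal obstacle to lie in the equivariant residue computation of the third paragraph: one must carry the $\Zbar$-action—trivial on $C$ but nontrivial on sheaves—through the slice, and it is exactly this sheaf-level nontriviality, combined with the projection formula, that produces the induction maps $\alpha_k$ rather than mere restriction.
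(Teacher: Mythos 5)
Your argument is correct, and it reaches the Proposition by a genuinely different route than the paper. The paper proceeds in three explicit steps: surjectivity of $\beta$ via the $\Gbar$-equivariant isomorphism $\scO_C(-O_f)\cong\scO_C(-n_kO_k)$ (both being pull-backs of $\scO_{\Cbar}(-1)$); the relation $\beta\circ\alpha=0$ by computing $[\scL^{2i}|_{n_kO_k}]$ as the induced representation $\alpha_k(\rho_i)$; and then, rather than computing $\ker\beta$ directly, a rank count --- it shows $\rank F_0K(\coh^{\Gbar}_C(Y_N))=\rank R(G)-\rank R(\Zbar)$ by exhibiting the free basis \eqref{eq:basis} of the quotient $K/F_0K$, and $\sum_k\rank R(\Gbar_k)=\rank R(G)+\rank R(\Zbar)$ by counting conjugacy classes in $\Gtilde=\mu^{-1}(G)$ together with the McKay correspondence for the binary polyhedral group $H$, after which torsion-freeness of $\coker\alpha$ closes the middle. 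You instead obtain exactness at the middle term in one stroke from the equivariant localization sequence (Thomason), with $\ker\beta=\Image\partial$ and the boundary evaluated as an equivariant tame symbol; note that your slice-plus-projection-formula residue computation at $O_k$, yielding $\alpha_k(\chi)$, is in substance the same calculation as the paper's second step, so the two proofs share their one representation-theoretic kernel. What your route buys: it bypasses the McKay-correspondence and conjugacy-class-counting input entirely and identifies $\ker\beta$ directly rather than by dimension bookkeeping. What it costs: dependence on equivariant $K$-theory machinery, plus two points you should make explicit with citations rather than assert --- (i) the untwisting $G^{\Gbar}_*(C^\circ)\cong R(\Zbar)\otimes G_*(\Cbar^\circ)$ is not automatic for a non-split central extension $1\to\Zbar\to\Gbar\to\Gbar/\Zbar\to1$, but it does hold here because the equivariant bundles $\scL^{2i}$ of \eqref{eq:simple_objects} realize every character of $\Zbar$; and (ii) your identification $K_1(\Cbar^\circ)=\scO(\Cbar^\circ)^\times$ needs $SK_1=0$, which is true since $\Cbar^\circ$ is a localization of $\bA^1$ (indeed of any smooth affine curve over $\bC$). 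Neither is a gap in substance. One small byproduct of the paper's proof that yours does not reproduce is the explicit free basis of $K(\coh^{\Gbar}_C(Y_N))/F_0K(\coh^{\Gbar}_C(Y_N))$, though nothing downstream in the paper depends on it beyond the rank identity itself.
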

The proof of the proposition is divided into three steps below.
We first show that $\beta$ is surjective:
\begin{step}\label{step:non-generic_stabilizer}
The additive group
$F_0K(\coh^{\Gbar}_C(Y_N))$ is generated by sheaves supported on
$O_1 \cup O_2 \cup O_3$.
\end{step}
\begin{proof}
It is obvious that $F_0K(\coh^{\Gbar}_C(Y_N))$ is generated by simple objects
(objects having no non-trivial subobjects).
Moreover, a simple object is supported on a single orbit $O$ and is determined by an
irreducible representation of the stabilizer subgroup $\Gbar_P$ of a point $P \in O$
by \eqref{eq:stabilizer_cat}.
Therefore, it is sufficient to show that the class in $K(\coh^{\Gbar}_C(Y_N))$ of a simple object $\scE$ supported on a free $G/Z$-orbit $O_f$ coincides with the class of some object $\scF$ supported on $O_1 \cup O_2 \cup O_3$.
Actually, we prove that for any $k \in \{1,2,3\}$ we can choose such an object $\scF$ supported on $O_k$.
Simple objects supported on the orbit $O_f$ are determined by irreducible representations
of the stabilizer subgroup $\Zbar \subset \Gbar$ by \eqref{eq:stabilizer_cat}.
To describe them, notice that $C=\bP(V)$ carries a $G$-equivariant line bundle $\scL=\scO_C(1)$
on which an element $\lambda I \in Z$ acts as the fiber-wise scalar multiplication by $\lambda$.
On $\scL^2$, the $G$-action is reduced to a $\Gbar$-action and the
induced action of $\Zbar$ on
the fibers of $\scL^0, \scL^2, \dots, \scL^{2(l-1)}$ are the irreducible representations of the cyclic group $\Zbar$, where $l$ is the order of $\Zbar$.
Therefore, the simple objects supported on $O_f$ are
\begin{equation}\label{eq:simple_objects}
\scL^0|_{O_f}, \scL^2|_{O_f}, \dots, \scL^{2(l-1)}|_{O_f},
\end{equation}
where we regard ${O_f}$ as a reduced subscheme.
Now consider the exact sequences
$$
0 \to \scL^{2i}\otimes \scO_C(-{O_f}) \to \scL^{2i} \to \scL^{2i}|_{O_f} \to 0
$$
and 
$$
0 \to \scL^{2i}\otimes \scO_C(-n_kO_k) \to \scL^{2i} \to \scL^{2i}|_{n_kO_k} \to 0
$$
for any $k \in \{1, 2, 3\}$ where $n_k$ is the order of $\Gbar_k/\Zbar$.
If we show $\scO_C(-{O_f}) \cong \scO_C(-n_kO_k)$ in $\coh^{\Gbar}(Y_N)$, then we obtain
\begin{equation}\label{eq:orbit_change}
[\scL^{2i}|_{O_f}]=[\scL^{2i}|_{n_kO_k}]
\end{equation}
in $K(\coh^{\Gbar}_C(Y_N))$ for any $k$ as desired.

Finally, we show $\scO_C(-{O_f}) \cong \scO_C(-n_kO_k)$.
Let $\Cbar \cong \bP^1$ be the quotient of $C$ by the action of $G/Z$.
Then both $\scO_C(-{O_f})$ and $\scO_C(-n_kO_k)$ are the pull-backs of
$\scO_{\Cbar}(-1)$ (equipped with the trivial $\Gbar$-action) and hence we obtain the isomorphism.
\end{proof}

\begin{step}
$\beta\circ \alpha=0$.
\end{step}
\begin{proof}
This is equivalent to the equality
$$
\beta_1\circ\alpha_1=\beta_2\circ\alpha_2=\beta_3\circ\alpha_3.
$$
We recall the isomorphism \eqref{eq:stabilizer} for  a free $G/Z$-orbit $O_f \subset C$:
$$
R(\Zbar) \cong K(\coh^{\Gbar}_{O_f}(Y_N)).
$$
Then it is sufficient to prove that $\beta_k\circ\alpha_k$ is identified with the pushforward map
$$
K(\coh^{\Gbar}_{O_f}(Y_N)) \to F_0K(\coh^{\Gbar}_C(Y_N)).
$$
Recall that $K(\coh^{\Gbar}_{O_f}(Y_N))$ has a basis of the form \eqref{eq:simple_objects}
and that their images in $K(\coh^{\Gbar}_C(Y_N))$ satisfy \eqref{eq:orbit_change}.
Hence the problem is reduced to showing that the map
$$K(\coh^{\Gbar}_{O_f}(Y_N)) \to K(\coh^{\Gbar}_{O_k}(Y_N))$$ defined by
$$[\scL^{2i}|_{O_f}] \mapsto [\scL^{2i}|_{n_kO_k}]$$
is identified with the induction map $\alpha_k$.
The irreducible representation $\rho_i$ of $\Zbar$ corresponding to $[\scL^{2i}|_{O_f}]$
is defined by sending $[\lambda I] \in \Zbar$ to $\lambda^{2i} \in \bC^\times$.
On the other hand, we have
$$
[\scL^{2i}|_{n_kO_k}] = \sum_{j=0}^{n_k-1} [\scL^{2i}(-j O_k)|_{O_k}].
$$
Here $\scL^{2i}|_{O_k}$ corresponds to a representation of $\Gbar_k$
whose restriction to $\Zbar$ is $\rho_i$.
Moreover, $\scO_C(-jO_k)|_{O_k}$ ($0 \le j \le n_k-1$) correspond to the
irreducible representations of the cyclic group $\Gbar_k/\Zbar$.
Thus the element of $R(\Gbar_k)$ corresponding to $[\scL^{2i}|_{n_kO_k}]$ is the sum
of all the irreducible representations of $\Gbar_k$ whose restrictions to $\Zbar$ are $\rho_i$.
Since $\Gbar_k$ is an abelian group, this is the induced representation of $\rho_i$.
Thus we obtain $\beta \circ \alpha=0$.
\end{proof}

\begin{step}
$\ker \beta = \Image \alpha$.
\end{step}
\begin{proof}
Notice that $\coker \alpha$ is torsion free, $\beta$ is surjective and $\beta \circ \alpha=0$.
Therefore it suffices to show
$$
\rank F_0K(\coh^{\Gbar}_C(Y_N)) = \sum_{k=1}^3 \rank R(\Gbar_k) - 2 \rank R(\Zbar).
$$
This follows from the following two equalities:
\begin{align}
\rank F_0K(\coh^{\Gbar}_C(Y_N)) &= \rank R(G) - \rank R(\Zbar) \label{eq:rankF0} \\
\sum_{k=1}^3 \rank R(\Gbar_k) &= \rank R(G) + \rank R(\Zbar).\label{eq:ranksum}
\end{align}
We first consider \eqref{eq:rankF0}.
The isomorphism \eqref{eq:Kcptspt} reduces  \eqref{eq:rankF0} to the equality
$$
\rank K(\coh^{\Gbar}_C(Y_N))/F_0K(\coh^{\Gbar}_C(Y_N)) = \rank R(\Zbar)
$$
and therefore it suffices to show that the classes
\begin{equation}\label{eq:basis}
[\scO_C], [\scL^{2}], \dots, [\scL^{2(l-1)}]
\end{equation}
form a free basis of the quotient $K(\coh^{\Gbar}_C(Y_N))/F_0K(\coh^{\Gbar}_C(Y_N))$
where
\[
l:=\rank R(\Zbar)=|\Zbar|.
\]
Recall that $\cL^2\cong \omega_C^{-1}$ is a $\Gbar$-equivariant line bundle on $C=\bP(V)$.
Since $\Zbar$ acts on $C$ trivially, if we regard $\cL^2$ as an object of $\coh^{\Zbar}(C)$,  we have
 \begin{equation}\label{eq:L^2i}
 \cL^{2i} \cong \cO_C(2i) \otimes \rho_{\ibar}\quad \text{in $\coh^{\Zbar}(C)$}
 \end{equation}
where $\ibar= i \mod l$ and
$\rho_0, \rho_1, \dots, \rho_{l-1}$
are the irreducible representations of the cyclic group $\Zbar \cong \bZ/l\bZ$.
This implies that \eqref{eq:basis} is linearly independent.
To see that \eqref{eq:basis} is a generator, we show that for any object $\scE \in \coh^{\Gbar}_C(Y_N)$
its class $[\scE]$ is a linear combination of \eqref{eq:basis} modulo $F_0K(\coh^{\Gbar}_C(Y_N))$.
We may assume that $\scE$ is a locally free sheaf on $C$ and we use the induction on $\rank \scE$.
If $\rank \scE=0$, there is nothing to prove and we may suppose $\rank \scE >0$.
If we regard $\scE$ as an object of $\coh^{\Zbar}(C)$, it splits as
$\scE=\bigoplus_i \scE_i \otimes_{\bC} \rho_i$ with $\scE_i \in \coh(C)$.
Suppose $\scE_i \ne 0$.
For any integer $m$ we have
\begin{equation}\label{eq:ZG/Z}
\Hom_{\scO_C}(\scL^{2i}, \scE\otimes \scL^{2lm})^{\Gbar}
=H^0((\scE \otimes \scL^{2ml-2i})^{\Zbar})^{\Gbar/\Zbar}.
\end{equation}
Here, \eqref{eq:L^2i} shows
$$(\scE \otimes \scL^{2ml-2i})^{\Zbar} \cong \scE_i \otimes \scO(2ml-2i) \ne 0$$
and the restriction map
$$
H^0((\scE \otimes \scL^{2ml-2i})^{\Zbar}) \to H^0((\scE \otimes \scL^{2ml-2i})^{\Zbar}|_{O_f})
$$
is surjective for a $\Gbar/\Zbar$-free orbit $O_f\subset C$ if $m$ is sufficiently large.
Since $H^0((\scE \otimes \scL^{2ml+2i})^{\Zbar}|_{O_f})$ is a non-zero multiple of the regular representation of
$\Gbar/\Zbar$, its $\Gbar/\Zbar$-invariant part is non-zero.
Therefore, \eqref{eq:ZG/Z} is non-zero and hence there is a non-zero homomorphism
$$\alpha: \scL^{2i} \hookrightarrow \scE\otimes \scL^{2lm}.$$
Now the induction hypothesis shows that $\coker \alpha$ is a linear combination of \eqref{eq:basis} modulo $F_0K(\coh^{\Gbar}_C(Y_N))$.
This shows that the class $[\scE\otimes \scL^{2lm}]$ is also a linear combination of \eqref{eq:basis} modulo $F_0K(\coh^{\Gbar}_C(Y_N))$.
Since we have
$$[\scE]-[\scE\otimes \scL^{2lm}] \in F_0K(\coh^{\Gbar}_C(Y_N)),$$
$[\scE]$ is a linear combination of \eqref{eq:basis} modulo $F_0K(\coh^{\Gbar}_C(Y_N))$.
Thus \eqref{eq:basis} is a free basis of $K(\coh^{\Gbar}_C(Y_N))/F_0K(\coh^{\Gbar}_C(Y_N))$
and therefore we have established \eqref{eq:rankF0}.

Next we prove \eqref{eq:ranksum}.
Let $\ZL(V) \subset \GL(V)$ be the subgroup consisting of the non-zero scalar matrices and consider
the multiplication map
$$
\mu: ZL(V) \times \SL(V) \to \GL(V).
$$
Then the kernel of $\mu$ is a group of order $2$ generated by $(-I, -I)$.
We put $\Gtilde=\mu^{-1}(G)$ and let $H\subset \SL(V)$ be the image of $\Gtilde$
with respect to the second projection.
For any element $(z, h) \in \Gtilde$, denote by $Z_{\Gtilde}(z,h)$ and $Z_G(zh)$
the centralizers of $(z, h)$ in $\Gtilde$ and $zh$ in $G$ respectively.
Then the restriction $\mu: Z_{\Gtilde}(z,h) \to Z_G(zh)$ is a surjective two-to-one map
and hence the number of conjugates of $(z,h)$ coincides with the number of
conjugates of $zh$.
Therefore, the number of conjugacy classes in $\Gtilde$ is twice the number
of conjugacy classes in $G$.
Thus we obtain
\begin{equation*}
\rank R(G)= \frac{1}{2}\rank R(\Gtilde).
\end{equation*}
Moreover, since $\Gtilde/Z \cong H$ and $Z$ is central in $\Gtilde$,
this can be written as
\begin{equation}\label{eq:rankR(G)}
\rank R(G) =  \frac{1}{2} \rank R(H) \times |Z|=\rank R(H) \times |\Zbar|. 
\end{equation}
Notice that $H$ acts on $V$ and $\Hbar:=H/N \cong \Gbar/\Zbar\subset \PGL(V)$ acts on $C=\bP(V)$.
Since $H$ is in $\SL(V)$, the McKay correspondence for the binary polyhedral (or dihedral) group $H$ establishes
\begin{equation}\label{eq:rankR(H)}
\sum_{k=1}^3 |\Hbar_k|=\rank R(H) +1
\end{equation}
where $\Hbar_k \subset \Hbar$ is the stabilizer of a point in $O_k$
(the left hand side of \eqref{eq:rankR(H)} is two plus the number of the irreducible exceptional curves
in the minimal resolution of $V/H$, which is also the minimal resolution of $Y_N/\Hbar$).
Moreover, the isomorphism
$\Hbar \cong \Gbar/\Zbar$
implies
\begin{equation}\label{eq:rankR(G_k)}
|\Hbar_k| \times |\Zbar| = |\Gbar_k| = \rank R(\Gbar_k).
\end{equation}
Putting the equalities \eqref{eq:rankR(G)}, \eqref{eq:rankR(H)} and \eqref{eq:rankR(G_k)} together, we obtain \eqref{eq:ranksum}.
\end{proof}

\begin{corollary}\label{cor:F_0^*}
The dual module $\Hom_\bZ(F_0K(\coh_C^{\Gbar}(Y_N)), \bZ)$ is isomorphic to
$$
\left\{(\theta_1, \theta_2, \theta_3) \in \bigoplus_{k=1}^3 \Hom_\bZ(R(\Gbar_k), \bZ)
\;\middle|\; \theta_1|_{\Zbar}=\theta_2|_{\Zbar}=\theta_3|_{\Zbar}\right\}.
$$
\end{corollary}
\subsection{Main theorem}\label{subsec:main}
\begin{proposition}\label{prop:-I}
Suppose a finite subgroup $G\subset \GL(2, \bC)$ contains $-I$ and $Y \to Y_N/\Gbar$ is a resolution
dominated by $\Ymax$.
Then there exists a generic stability parameter $\theta \in \Theta$
such that $\scM_\theta \cong Y$.
Especially, the maximal resolution $\Ymax$ of $(\bC^2/G, B)$
is isomorphic to the moduli space of $G$-constellations for some generic stability parameter $\theta$.
\end{proposition}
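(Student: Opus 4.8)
The plan is to reduce the whole statement to a question about $\Gbar$-constellations on $Y_N$ and then to apply the abelian case of the conjecture (Theorem~\ref{thm:abelian}) locally at the three special orbits. By Theorem~\ref{thm:iterated_construction}, if for some generic stability parameter $\xi$ for $\Gbar$-constellations on $Y_N$ we have $\scM_\xi(Y_N)\cong Y$, then $\theta=m\theta^N+\varphi(\xi)$ for $m\gg 0$ gives $\scM_\theta(V)\cong\scM_\xi(Y_N)\cong Y$, and $\theta$ is generic because $\xi$ is. So it suffices to realize the given resolution $Y\to Y_N/\Gbar$ as $\scM_\xi(Y_N)$ for a generic $\xi$. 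Since the actual stability condition depends only on the image of $\xi$ in $F_0K(\coh^{\Gbar}_C(Y_N))^*_\bQ$, I would parametrize stability parameters by triples $(\theta_1,\theta_2,\theta_3)$ with $\theta_1|_{\Zbar}=\theta_2|_{\Zbar}=\theta_3|_{\Zbar}$, using Corollary~\ref{cor:F_0^*}.

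Next I would analyze the morphism $\scM_\xi(Y_N)\to Y_N/\Gbar$. Away from $C$ the group $\Gbar$ acts freely, while along $C$ the generic stabilizer is $\Zbar$, which acts trivially on $C$ and by scalars on the normal direction; hence $\Zbar$ consists of quasi-reflections and $Y_N/\Gbar$ is smooth outside the three points $\bar O_1,\bar O_2,\bar O_3$. Near $\bar O_k$ an analytic slice identifies $Y_N/\Gbar$ with the abelian quotient $\bC^2/\Gbar_k$, where $\Gbar_k$ acts diagonally, the tangent-to-$C$ factor carrying the cyclic group $\Gbar_k/\Zbar$ and the normal factor carrying $\Zbar$. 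By the standard slice and induction argument for equivariant sheaves, a $\Gbar$-constellation supported near an orbit $\Gbar\cdot P$ is induced from a $\Gbar_k$-constellation on this local model, and its $\xi$-stability is governed by the restriction $\theta_k=\xi|_{R(\Gbar_k)}$. Thus $\scM_\xi(Y_N)\to Y_N/\Gbar$ is an isomorphism off the three points and, near $\bar O_k$, coincides with the moduli space $\scM_{\theta_k}(\bC^2)$ of $\theta_k$-stable $\Gbar_k$-constellations; in particular $\scM_\xi(Y_N)$ is the union of three independent local resolutions of the cyclic singularities, glued along the proper transform of $\Cbar$.

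With this local-to-global picture a resolution $Y\to Y_N/\Gbar$ is the same datum as a local resolution $Y^{(k)}$ of each $\bC^2/\Gbar_k$, and domination by $\Ymax$ is equivalent to each $Y^{(k)}$ being dominated by the local maximal resolution of $(\bC^2/\Gbar_k,B_{N,k})$. Since each $\Gbar_k$ is abelian, Theorem~\ref{thm:abelian} furnishes a generic $\theta_k\in R(\Gbar_k)^*$ with $\scM_{\theta_k}(\bC^2)\cong Y^{(k)}$. The only point to verify is the compatibility constraint of Corollary~\ref{cor:F_0^*}, namely that the three $\theta_k$ admit a common restriction $\theta_k|_{\Zbar}$. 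Here the key observation is that $\Zbar$ acts by quasi-reflections, so $\bC^2/\Zbar$ is smooth and the value of $\theta_k|_{\Zbar}$ has no effect on which local resolution $Y^{(k)}$ is produced; equivalently, the chamber realizing $Y^{(k)}$ is invariant under changing $\theta_k|_{\Zbar}$. Hence each $\theta_k$ may be adjusted within its chamber so that all three restrictions coincide with a single common generic value, and the resulting triple assembles to a generic $\xi\in F_0K(\coh^{\Gbar}_C(Y_N))^*_\bQ$ with $\scM_\xi(Y_N)\cong Y$.

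I expect the main obstacle to be the local-to-global identification in the second paragraph: proving rigorously that $\scM_\xi(Y_N)$ is an isomorphism over the free locus and is analytically-locally the abelian moduli space $\scM_{\theta_k}(\bC^2)$ near $\bar O_k$, with the stated matching of stability parameters, and that the genuinely new exceptional curves are insensitive to $\theta_k|_{\Zbar}$. Once this is established, the remaining steps follow formally from Theorem~\ref{thm:abelian} and Corollary~\ref{cor:F_0^*}. Finally, applying the construction to $Y=\Ymax$ itself, which is trivially dominated by $\Ymax$, yields the ``especially'' clause: $\Ymax\cong\scM_\theta$ for a suitable generic $\theta$.
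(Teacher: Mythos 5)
Your overall skeleton --- reducing via Theorem~\ref{thm:iterated_construction} to finding a generic $\xi$ with $\scM_\xi(Y_N)\cong Y$, encoding $\xi$ as a triple $(\theta_1,\theta_2,\theta_3)$ via Corollary~\ref{cor:F_0^*}, and realizing $Y$ analytically-locally at the three special orbits by moduli of $\Gbar_k$-constellations --- is the paper's strategy, and your local-to-global identification at the end matches what the paper asserts. The genuine gap is your treatment of the compatibility constraint $\theta_1|_{\Zbar}=\theta_2|_{\Zbar}=\theta_3|_{\Zbar}$. You claim that because $\Zbar$ acts on $T_k$ by quasi-reflections (so $T_k/\Zbar$ is smooth), the value of $\theta_k|_{\Zbar}$ ``has no effect on which local resolution is produced; equivalently, the chamber realizing $Y^{(k)}$ is invariant under changing $\theta_k|_{\Zbar}$''. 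Neither claim is proved, and the second is false as stated: varying $\theta_k|_{\Zbar}$ will in general cross walls (at the very least walls that fix the variety but change the chamber and the Fourier--Mukai transform), so ``adjusting each $\theta_k$ within its chamber'' to force a common restriction is circular --- you may stay inside the chamber only if the restriction is already a free direction, which is precisely what needs proof. Smoothness of $T_k/\Zbar$ gives no information about the chamber structure of the parameter space for $\Gbar_k$, and the plain Theorem~\ref{thm:abelian} that you invoke produces \emph{some} generic $\theta_k$ with $\scM_{\theta_k}\cong Y^{(k)}$ but gives no control whatsoever over $\theta_k|_{\Zbar}$.

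The paper closes exactly this gap with machinery your proposal never touches. It passes to the three-dimensional tangent space $\Ttilde_k=T_{(P_k,0)}(Y_N\times\bC)$, where $\Zbar$ acts inside $\SL(\Ttilde_k)$ and preserves a two-dimensional slice $T'$ transversal to $C$ with $\Zbar\subset\SL(T')$; on this slice $\Zbar$ is \emph{not} a quasi-reflection group but an $A$-type subgroup of $\SL(2)$, whose quotient has a genuine minimal resolution $W$. The restriction $\theta_k|_{R(\Zbar)}$ is then pinned down by the induced ample class on $W$: the paper first fixes one generic $\theta^{\Zbar}$ with class $l_{\theta^{\Zbar}}\in\Amp(W)$, uses the surjectivity of $\Amp(U_{\Sigma_k})\to\Amp(W)$ (the nontrivial second half of Proposition~\ref{prop:ample}, proved by a toric induction with star subdivisions) to choose $l_k$ restricting to $l_{\theta^{\Zbar}}$, and then applies the refined Craw--Ishii statement (Theorem~\ref{thm:craw-ishii}), which realizes \emph{simultaneously} the chosen crepant resolution and the prescribed class $[\varphi_{\theta_k}^*(\theta_k)]=l_k$; its proof requires the wall-type analysis (perturbing along type-$0$ walls). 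From $l_k|_W=l_{\theta^{\Zbar}}$ one concludes $\theta_k|_{R(\Zbar)}=\theta^{\Zbar}$ for all three $k$ at once, and only then does Corollary~\ref{cor:F_0^*} yield the global $\xi$. So the step you dismiss as ``the only point to verify'' is in fact the crux of the proof: contrary to your heuristic, $\theta_k|_{\Zbar}$ does carry geometric content (the polarization of $W$ inside the threefold $U_{\Sigma_k}$), and what must be shown --- and is shown in the paper --- is not that this restriction is irrelevant, but that it can be prescribed arbitrarily among generic values, simultaneously for $k=1,2,3$.
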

\begin{proof}
We may assume $G$ is non-abelian by Theorem \ref{thm:abelian}
so we may apply the results of section \ref{subsec:key}.
If we show there exists a generic parameter $\xi \in K(\coh^{\Gbar}(Y_N))^0_\bQ$ such that
$\scM_\xi(Y_N) \cong Y$,
then the assertion follows from Theorem \ref{thm:iterated_construction}.

Let $P \in C$ be a point.
Since $\Gbar$ acts on $Y_N \times \bC=\cM_{\theta^N}(V \times \bC)$ and $\Zbar$ fixes $(P,0)$,
$\Zbar$ acts on the Zariski tangent space $\Ttilde:=T_{(P,0)}(Y_N \times \bC) \cong \bC^3$
as a subgroup of $\SL(\Ttilde)$.
Note that as a representation of $\Zbar$, $\Ttilde$ is independent of the choice of the point $P$.
Let $T' \subset \Ttilde$ be the two-dimensional $\Zbar$-invariant subspace transversal to $C$;
then $\Zbar \subset \SL(T')$.
Fix a generic stability parameter $\theta^{\Zbar} \in R(\Zbar)^*_\bQ$
for $\Zbar$-constellations (on $\Ttilde$) satisfying $\theta^{\Zbar}(\bC[\Zbar])=0$.
Then $W:=\scM_{\theta^{\Zbar}}(T')$ is the minimal resolution
of $T'/\Zbar$.
The Fourier-Mukai transform
$$
\varphi_{\theta^{\Zbar}}^*: R(\Zbar)^*_\bQ \cong K(\coh^{\Zbar}(T'))_\bQ \overset{\sim}{\longrightarrow}
K(\coh W)_\bQ
$$
sends
$\theta^{\Zbar}$ to an element $l_{\theta^{\Zbar}}$ of $F^1K(\coh W)_\bQ \cong \Pic(W)_\bQ$
and it lies in the ample cone $\Amp(W)$ as in \eqref{eq:FM(theta)}.
(Notice that here $\dim T'=2$ and $F^2K(\coh W)=0$.)

Take a point $P_k$ in the orbit $O_k$ for each $k \in \{1,2,3\}$.
We consider the tangent spaces $\Ttilde_k:=T_{(P_k,0)}(Y_N \times \bC)$ and $T_k=T_{P_k}(Y_N)$.
Let $R_k$ denote the complete local ring of $T_k/\Gbar_k$ at $[0]$
which is isomorphic to the complete
local ring of $Y_N/\Gbar$ at $[P_k]$:
\[
R_k:=\widehat{\cO}_{T_k/\Gbar_k, [0]} \cong \widehat{\cO}_{Y_N/\Gbar, [P_k]}.
\]
By this isomorphism, there is a resolution
$$Y_k \to T_k/\Gbar_k$$
with an isomorphism
\begin{equation}\label{eq:formal_isom}
Y_k \times_{(T_k/\Gbar_k)} \Spec R_k \cong Y \times_{(Y_N/\Gbar)} \Spec R_k
\end{equation}
over $\Spec R_k$.
Since $\Gbar_k$ is abelian, we can apply Proposition \ref{prop:ample} where
the first factor of $T_k \cong \bC^2$ is $T_{P_k}(C)$
(so that $(1,0)$ lies in $T_{P_k}(C)$ and $G_{(1,0)}=\Zbar$)
and obtain a projective crepant resolution
$$
U_{\Sigma_k} \to \Ttilde_k/{\Gbar_k}
$$
such that $Y_k \subset U_{\Sigma_k}$ and that the restriction map $\Amp(U_{\Sigma_k}) \to \Amp(W)$
is surjective.
Choose a class $l_k \in \Amp(U_{\Sigma_k})$ which is mapped to $l_{\theta^{\Zbar}} \in \Amp(W)$ for each $k$.
Then by Theorem \ref{thm:craw-ishii} we can find a generic stability parameter
$\theta_k$ for $\Gbar_k$-constellations on $\Ttilde_k$ such that
$\scM_{\theta_k}(\Ttilde_k) \cong U_{\Sigma_k}$ and the class of $\varphi_{\theta_k}^*(\theta_k)$ in $\Pic(U_{\Sigma_k})_\bQ$ coincides with $l_k$.
Since $[\varphi_{\theta_k}^*(\theta_k)]=l_k$ and $l_k$ restricts to $l_{\theta^\Zbar}$,
$\theta_k$ restricts to $\theta^{\Zbar}$ on $R(\Zbar)$.
Then  Corollary \ref{cor:F_0^*} shows that $(\theta_1, \theta_2, \theta_3)$ determines
an element of $F_0K(\coh_C^{\Gbar}(Y_N))^*_\bQ$.
Lift it to an element $\xi \in K(\coh^{\Gbar}(Y_N))_\bQ \cong K(\coh_C^{\Gbar}(Y_N))^*_\bQ$.
Since the restriction of $\xi$ to $K(\coh^{\Gbar}(O_k))_\bQ \cong R(\Gbar_k)^*_\bQ$ is $\theta_k$ which is of rank $0$,
we have $\rank \xi=0$
and we can consider the moduli space $\scM_\xi(Y_N)$.

We claim that there is an isomorphism
\begin{align}\label{eq:isom_over_R_k}
\cM_\xi(Y_N) \times_{(Y_N/\Gbar)} \Spec R_k \cong \cM_{\theta_k}(T_k) \times_{(T_k/\Gbar_k)} \Spec R_k
\end{align}
over $\Spec R_k$.
For any locally noetherian scheme $S$ over $\Spec R_k$, 
an $S$-valued point of the left hand side of \eqref{eq:isom_over_R_k} is
given by a flat family of $\xi$-stable $\Gbar$-constellations on $Y_k$ parameterized by $S$,
which is an object of $\coh^{\Gbar}(Y_N \times_{(Y_N/\Gbar)} S)$.
Similarly, an $S$-valued point of the right hand side of \eqref{eq:isom_over_R_k} is
given by a flat family of $\theta_k$-stable $\Gbar_k$-constellations on $T_k$ parameterized by $S$,
which is an object of $\coh^{\Gbar_k}(T_k \times_{(T_k/\Gbar_k)} S)$.

Notice that
\[
\begin{aligned}
Y_N \times_{(Y_N/\Gbar)} S & \cong (Y_N \times_{(Y_N/\Gbar)} \Spec R_k)\times_{(\Spec R_k)} S \\
					&\cong \left(\coprod_{Q \in O_k} \Spec \widehat{\cO}_{Y_N, Q}\right) \times_{(\Spec R_k)} S \\
					&\supset \Spec \widehat{\cO}_{Y_N, P_k} \times_{(\Spec R_k)} S \\
					&\cong \Spec \widehat{\cO}_{T_k, 0} \times_{(\Spec R_k)} S \\
					&\cong T_k \times_{(T_k/\Gbar_k)} S
					\end{aligned}
\]
which induces an equivalence
\[
\coh^{\Gbar}(Y_N \times_{(Y_N/\Gbar)} S ) \cong \coh^{\Gbar_k}(T_k \times_{(T_k/\Gbar_k)} S).
\]
(this is almost the same as \eqref{eq:stabilizer_cat}).
This equivalence gives a bijection between $S$-valued points of the both sides of \eqref{eq:isom_over_R_k}
and we obtain \eqref{eq:isom_over_R_k}.

Our choice of $\theta_k$ implies $\cM_{\theta_k}(T_k) \cong Y_k$ and hence  \eqref{eq:formal_isom} and  \eqref{eq:isom_over_R_k} yield an isomorphism
\[
\cM_\xi(Y_N) \times_{(Y_N/\Gbar)} \Spec R_k \cong Y \times_{(Y_N/\Gbar)} \Spec R_k.
\]
over $\Spec R_k$.
Since $\cM_\xi(Y_N)$ and $Y$ are both isomorphic to $Y_N/\Gbar$ except over the points $[P_1]$, $[P_2]$, and $[P_3]$,
we obtain $\cM_\xi(Y_N) \cong Y$.
\end{proof}
 
Recall that we say $G \subset \GL(2, \bC)$ is {\it small} if $G$ acts freely on $\bC^2 \setminus \{0\}$.
The following lemma follows from the classification of small subgroups of $\GL(2, \bC)$
but we give a proof for the reader's sake.
\begin{lemma}
If a finite small subgroup $G \subset \GL(2, \bC)$ is non-abelian, then it contains $-I$
as a unique element of order $2$. 
\end{lemma}
\begin{proof}
If $G$ is non-abelian, then its image $G' \subset \PGL(2, \bC)$ is also non-abelian
and therefore it is either a dihedral or a polyhedral group.
Especially, the orders $|G'|$ and  $|G|$ are even.
Then $G$ contains an element of order $2$.
If it is not $-I$, then it fixes a line in $\bC^2$,
contradicting the smallness of $G$.
\end{proof}
\begin{theorem}\label{thm:small}
If $G \subset \GL(2, \bC)$ is a finite small subgroup,
then Conjecture \ref{conjecture:main} is true.
\end{theorem}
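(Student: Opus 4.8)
The plan is to treat the two implications of Conjecture~\ref{conjecture:main} separately and, for the harder ``if'' direction, to split into the abelian and non-abelian cases. The ``only if'' implication is immediate from Proposition~\ref{prop:only_if}: if $Y\cong\scM_\theta$ for a generic $\theta$, that proposition produces a morphism $\Ymax\to\scM_\theta\cong Y$ over $X$, so $Y$ is dominated by $\Ymax$. For the ``if'' implication, note first that $B=0$ since $G$ is small, so $\Ymax$ is the maximal resolution of $(X,0)$. When $G$ is abelian the assertion is then exactly Theorem~\ref{thm:abelian}. Thus the entire content is the ``if'' direction for $G$ non-abelian, which I would reduce to Proposition~\ref{prop:-I} through the iterated construction of \S\ref{sec:iterated}.

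So suppose $G$ is finite, small and non-abelian. Recall from the introduction that every such group contains $-I$; hence with $N:=\langle -I\rangle$ and $\Gbar:=G/N$ the set-up of \S\ref{sec:iterated} applies. Fix a generic $\theta^N$ and put $Y_N=\scM_{\theta^N}(\bC^2)$, the minimal resolution of the $A_1$ singularity $\bC^2/N$, on which $\Gbar$ acts. As recorded at the beginning of the section treating the case $G\ni -I$, the maximal resolution of $(Y_N/\Gbar,B_N)$ coincides with $\Ymax$. Consequently it suffices to realise a given resolution $Y\to X$ dominated by $\Ymax$ as a resolution of $Y_N/\Gbar$ that is again dominated by $\Ymax$: Proposition~\ref{prop:-I} then supplies the desired generic parameter $\theta\in\Theta$ with $\scM_\theta\cong Y$, and the special case $Y=\Ymax$ recovers the ``Especially'' statement there.

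The key step, which I expect to be the main obstacle, is to verify that $Y\to X$ factors through the partial resolution $Y_N/\Gbar\to X$. The exceptional curve $C\cong\bP^1$ of $Y_N\to\bC^2/N$ descends to a single irreducible curve $\Cbar=C/\Gbar\subset Y_N/\Gbar$, which is the unique exceptional divisor of $Y_N/\Gbar\to X$. A local toric computation along the generic point of $C$, where the stabiliser in $\Gbar$ is the cyclic group $\Zbar$ of order $l$ acting trivially on $C$ and through a primitive character on the normal direction, shows that $\Cbar$ is a ramification divisor of index $l$ and has discrepancy $-\tfrac{l-1}{l}\le 0$ over $(X,0)$; in particular $\Cbar$ occurs on $\Ymax$, which reconfirms $\Ymax\to Y_N/\Gbar$. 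I would then argue that $\Cbar$ already appears on the minimal resolution of $X$, being the central node of the star-shaped resolution graph of the quotient singularity, so that its strict transform is present on every resolution lying between the minimal resolution and $\Ymax$. Hence any $Y$ dominated by $\Ymax$ retains $\Cbar$ and therefore factors through $Y_N/\Gbar$, realising $Y$ as a resolution of $Y_N/\Gbar$ dominated by $\Ymax$; applying Proposition~\ref{prop:-I} completes the proof. The delicate point to check carefully is exactly that $\Cbar$ is never contracted in passing from $\Ymax$ down to $Y$---equivalently, that $\Cbar$ lies on the minimal resolution---since in general $\Ymax$ extracts divisors of negative discrepancy that the minimal resolution omits, so one must isolate $\Cbar$ as a divisor that no smooth $Y$ can contract.
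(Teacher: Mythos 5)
Your proposal is correct and follows essentially the same route as the paper: the ``only if'' part via Proposition~\ref{prop:only_if}, the abelian case via Theorem~\ref{thm:abelian}, and the non-abelian case via $-I\in G$ together with Proposition~\ref{prop:-I}, the only substantive step being that every resolution $Y\to X$ factors through $Y_N/\Gbar$. The point you rightly flag as delicate---that $\Cbar$ lies on the minimal resolution, i.e.\ that the minimal resolution of $\bC^2/G$ factors through $Y_N/\Gbar$---is exactly what the paper settles by citing Brieskorn~\cite{Brieskorn_RSKF}, and your sketch via the discrepancy $-\frac{l-1}{l}$ of $\Cbar$ and the central node of the star-shaped resolution graph is a sound substitute for that citation.
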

\begin{proof}
The abelian case follows from Theorem \ref{thm:abelian}.
Otherwise, $G$ contains $-I$ by the above lemma.
Moreover, the minimal resolution of $V/G$ factors through $Y_N/\Gbar$; see \cite{Brieskorn_RSKF}.
Then the assertion follows from Proposition \ref{prop:-I}.
\end{proof}

\bibliographystyle{amsalpha}

\begin{thebibliography}{IINdC13}

\bibitem[BCZ17]{Bayer_Craw_Zhang}
Arend Bayer, Alastair Craw, and Ziyu Zhang, \emph{Nef divisors for moduli
  spaces of complexes with compact support}, Selecta Math. (N.S.) \textbf{23}
  (2017), no.~2, 1507--1561. \MR{3624918}

\bibitem[BKR01]{Bridgeland-King-Reid}
Tom Bridgeland, Alastair King, and Miles Reid, \emph{The {M}c{K}ay
  correspondence as an equivalence of derived categories}, J. Amer. Math. Soc.
  \textbf{14} (2001), no.~3, 535--554 (electronic). \MR{MR1824990
  (2002f:14023)}

\bibitem[Bri68]{Brieskorn_RSKF}
Egbert Brieskorn, \emph{Rationale {S}ingularit\"aten komplexer {F}l\"achen},
  Invent. Math. \textbf{4} (1967/1968), 336--358. \MR{0222084 (36 \#5136)}

\bibitem[Bri99]{Bridgeland_ETCFMT}
Tom Bridgeland, \emph{Equivalences of triangulated categories and
  {F}ourier-{M}ukai transforms}, Bull. London Math. Soc. \textbf{31} (1999),
  no.~1, 25--34. \MR{MR1651025 (99k:18014)}

\bibitem[Bri07]{Bridgeland_SCTC}
\bysame, \emph{Stability conditions on triangulated categories}, Ann. of Math.
  (2) \textbf{166} (2007), no.~2, 317--345. \MR{2373143 (2009c:14026)}

\bibitem[CI04]{craw-ishii}
Alastair Craw and Akira Ishii, \emph{Flops of {$G$}-{H}ilb and equivalences of
  derived categories by variation of {GIT} quotient}, Duke Math. J.
  \textbf{124} (2004), no.~2, 259--307. \MR{MR2078369}

\bibitem[IINdC13]{Ishii-Ito-Nolla}
Akira Ishii, Yukari Ito, and \'Alvaro Nolla~de Celis, \emph{On {$G/N$}-{H}ilb
  of {$N$}-{H}ilb}, Kyoto J. Math. \textbf{53} (2013), no.~1, 91--130.
  \MR{3049308}

\bibitem[Ish02]{Ishii_MKG}
Akira Ishii, \emph{On the {M}c{K}ay correspondence for a finite small subgroup
  of {${\rm GL}(2,\Bbb C)$}}, J. Reine Angew. Math. \textbf{549} (2002),
  221--233. \MR{MR1916656 (2003d:14021)}

\bibitem[IU15]{Ishii-Ueda_SMEC}
Akira Ishii and Kazushi Ueda, \emph{The special {M}c{K}ay correspondence and
  exceptional collections}, Tohoku Math. J. (2) \textbf{67} (2015), no.~4,
  585--609. \MR{3436544}

\bibitem[Jun16]{Jung_TQS}
Seung-Jo Jung, \emph{Terminal quotient singularities in dimension three via
  variation of {GIT}}, J. Algebra \textbf{468} (2016), 354--394. \MR{3550869}

\bibitem[Jun18]{Jung_CI}
\bysame, \emph{On the {C}raw-{I}shii conjecture}, J. Pure Appl. Algebra
  \textbf{222} (2018), no.~7, 1579--1605. \MR{3763272}

\bibitem[Kaw18]{Kawamata_DMC}
Yujiro Kawamata, \emph{Derived {M}c{K}ay correspondence for {$GL(3,
  \bold{C})$}}, Adv. Math. \textbf{328} (2018), 1199--1216. \MR{3771150}

\bibitem[Kin94]{King}
A.~D. King, \emph{Moduli of representations of finite-dimensional algebras},
  Quart. J. Math. Oxford Ser. (2) \textbf{45} (1994), no.~180, 515--530.
  \MR{MR1315461 (96a:16009)}

\bibitem[K\k{e}14]{Oskar}
Oskar K\k{e}dzierski, \emph{Danilov's resolution and representations of the
  {M}c{K}ay quiver}, Tohoku Math. J. (2) \textbf{66} (2014), no.~3, 355--375.
  \MR{3266737}

\bibitem[KM98]{Kollar-Mori}
J\'anos Koll\'ar and Shigefumi Mori, \emph{Birational geometry of algebraic
  varieties}, Cambridge Tracts in Mathematics, vol. 134, Cambridge University
  Press, Cambridge, 1998, With the collaboration of C. H. Clemens and A. Corti,
  Translated from the 1998 Japanese original. \MR{1658959}

\bibitem[KSB88]{Kollar-SB}
J.~Koll\'ar and N.~I. Shepherd-Barron, \emph{Threefolds and deformations of
  surface singularities}, Invent. Math. \textbf{91} (1988), no.~2, 299--338.
  \MR{922803}

\bibitem[NdCS17]{Nolla-Sekiya}
\'Alvaro Nolla~de Celis and Yuhi Sekiya, \emph{Flops and mutations for crepant
  resolutions of polyhedral singularities}, Asian J. Math. \textbf{21} (2017),
  no.~1, 1--45. \MR{3632435}

\end{thebibliography}
\def\cprime{$'$} \def\cprime{$'$}
\providecommand{\bysame}{\leavevmode\hbox to3em{\hrulefill}\thinspace}
\providecommand{\MR}{\relax\ifhmode\unskip\space\fi MR }
\providecommand{\MRhref}[2]{%
  \href{http://www.ams.org/mathscinet-getitem?mr=#1}{#2}
}
\providecommand{\href}[2]{#2}

\noindent
Akira Ishii

Graduate School of Mathematics, Nagoya University,
Furocho, Chikusaku, Nagoya, 464-8602
Japan

{\em e-mail address}\ : \ akira141@math.nagoya-u.ac.jp


\end{document}